\newif\ifarxiv
\tikzset
{
	diagram/.style =
	{
		arrows = -> ,
		> = angle 60 ,
		auto = left ,
		text depth = 0.25ex , 
		font = \small			
	} ,
	code node/.append style =
	{
		every node/.append style =
		{
			execute at begin node = \begin{texttt} ,
			execute at end node = \end{texttt}
		}
	} ,
	code diagram/.style =
	{
		diagram ,
		code node
	} ,
	math node/.append style =
	{
		every node/.append style =
		{
			execute at begin node = \begin{math} ,
			execute at end node = \end{math}
		}
	} ,
	math diagram/.style =
	{
		diagram ,
		math node
	} ,
	string diagram/.style =
	{
		math node ,
	} ,
	online/.style =
	{
		shape = rectangle ,
		rounded corners = 2mm ,
		inner sep = 1.5pt ,
		fill = white ,
		opacity = 1.0 ,
		anchor = center	
	} ,
	double arrow/.style =
	{
		double distance between line centers = 2pt ,
		-implies 
	} ,
	mapto/.append style =
	{
		arrows = |-> ,
	} ,
	monic/.append style =
	{
		arrows = >-> ,
	} ,
	epic/.append style =
	{
		arrows = ->> ,
	} ,
	string/.append style =
	{
		arrows = - ,
	} ,
	overcross/.append style =
	{
		preaction = {draw = white , - , line width = 6pt}
	} ,
	label/.append style =
	{
		font = \scriptsize
	} ,
	forall/.append style =
	{
		line width = 0.8pt ,
	} ,
	exists/.append style =
	{
		densely dashed
	} ,
	2-cell/.style =
	{
		shape = rectangle ,
		rounded corners = 2mm ,
		minimum size = 4mm ,
		inner sep = 1.5pt ,
		draw = black!75 ,
		fill = white ,
		opacity = 0.9
	} ,
	1-cell/.style =
	{
		shape = rectangle ,
		rounded corners = 2mm ,
		inner sep = 1.5pt ,
		fill = white ,
		opacity = 1.0 ,
		anchor = center	
	} ,
	0-cell/.style =
	{
		shape = rectangle ,
		rounded corners = 2mm ,
		inner sep = 2pt
	} ,
	diamond/.style =
	{
		shape = diamond ,
		minimum size = 4mm ,
		inner sep = 1.0pt ,
		draw = black!75 ,
		fill = white ,
		opacity = 0.9
	} ,
}
\let\textcite\citet
\let\parencite\citep
\protected\def\tikz@nonactivecolon{\ifmmode\mathrel{\mathop\ordinarycolon}\else:\fi}
\newcommand*{\C}{\mathbb{C}}
\newcommand*{\cst}[1]{\mathrm{#1}}
\newcommand*{\stdcat}[1]{\textup{\bfseries #1}}
\newcommand*{\Set}{\stdcat{Set}}
\newcommand*{\sSet}{\stdcat{sSet}}
\newcommand*{\Top}{\stdcat{Top}}
\newcommand*{\Hot}{\stdcat{Hot}}
\newcommand*{\Cat}{\stdcat{Cat}}
\newcommand*{\op}{^{\mathrm{op}}}
\newcommand*{\inv}{^{-1}}
\DeclareMathOperator{\Id}{Id}
\DeclareMathOperator{\Hom}{Hom}
\DeclareMathOperator{\yo}{y}
\DeclarePairedDelimiter\abs{\lvert}{\rvert}
\DeclareSymbolFont{bbold}{U}{bbold}{m}{n}
\DeclareSymbolFontAlphabet{\mathbbold}{bbold}
\newcommand*{\bbtwo}{\mathbbold{2}}
\newcommand*{\bbthree}{\mathbbold{3}}
\newcommand*{\zstr}{{\cdot}} 
\NewDocumentCommand \id {m}
{
	\Id
	\ifthenelse
	{\equal{#1}{}}
	{}
	{_{#1}} 
}
\RenewDocumentCommand \hom {o m m}
{%
	\IfNoValueTF{#1}%
	{{#2} \to {#3}}%
	{\Hom _{#1} \, ({#2} , {#3})}%
}
\NewDocumentCommand \define {s o m}
{%
	\hypertarget%
	{\IfValueTF{#2}{#2}{#3}}%
	{\IfBooleanTF{#1}{#3}{\emph{#3}}}%
}
\NewDocumentCommand \refer {o m}
{%
	\hyperlink%
	{\IfValueTF {#1}{#1}{#2}}%
	{#2}%
}
\NewDocumentCommand \degen {} {\ast}
\title{Varieties of Cubical Sets}
\date{\today}
\author{Ulrik Buchholtz\inst{1} \and Edward Morehouse\inst{2}}
\institute{Department of Philosophy, Carnegie Mellon University,\\
  Pittsburgh, PA 15213, USA\\
  \emph{Current address:}
  Fachbereich Mathematik, Technische Universit{\"a}t Darmstadt,\\
  Schlossgartenstra\ss e~7, 64289 Darmstadt, Germany\\
  \email{buchholtz@mathematik.tu-darmstadt.de}
\and
  Carnegie Mellon School of Computer Science,\\
  5000 Forbes Avenue, Pittsburgh, PA 15213, USA\\
  \emph{Current address:}
  Department of Mathematics and Computer Science,\\
  Wesleyan University,\\
  265 Church Street, Middletown, CT 06459, USA\\
  \email{emorehouse@wesleyan.edu}}
\begin{document}

\maketitle

\begin{abstract}
  We define a variety of notions of cubical sets, based on sites
  organized using substructural algebraic theories presenting PRO(P)s
  or Lawvere theories. We prove that all our sites are test categories
  in the sense of Grothendieck, meaning that the corresponding
  presheaf categories of cubical sets model classical homotopy
  theory. We delineate exactly which ones are even strict test
  categories, meaning that products of cubical sets correspond to
  products of homotopy types.
\end{abstract}

\section{Introduction}
\label{sec:introduction}
There has been substantial interest recently in the use of
cubical structure as a basis for higher-dimensional type theories
\parencite{Awodey2016,Bezem-Coquand-Huber2014,Cohen-Coquand-Huber-Mortberg2016,%
  Angiuli-Harper-Wilson2017}.
A question that quickly arises is, what sort of cubical structure,
because there are several plausible candidates to choose from.

The suitability of a given cubical structure as a basis for a higher-dimensional type theory
is dependent on at least two considerations:
its proof-theoretic characteristics (e.g. completeness with respect to a given class of models,
decidability of equality of terms, existence of canonical forms)
and its homotopy-theoretic characteristics, most importantly,
that the synthetic homotopy theory to which it gives rise should agree
with the standard homotopy theory for topological spaces.

\paragraph{Contributions.} In this paper we organize various notions of cubical sets along several axes, using substructural algebraic theories as a guiding principle~\parencite{Mauri2005}.
We define a range of \emph{cube categories} (or \emph{cubical sites}), categories $\C$ for which the corresponding presheaf category $\widehat\C$ can be thought of as a category of cubical sets.
We then consider each of these from the perspective of \emph{test categories} \parencite{Grothendieck1983}, which relates presheaf categories and homotopy theory.
We give a full analysis of our cubical sites as test categories.

\subsection{Cube Categories}

Our cube categories are presented
as monoidal categories with a single generating object, $X$, representing an abstract dimension.
To give such a presentation, then, is to give a collection of morphism generators, $f : X^{\otimes n} \to X^{\otimes m}$,
and a collection of equations between parallel morphisms.
Such a presentation is known in the literature as a ``PRO''.
Two closely-related notions are those of ``PROP'' and of ``Lawvere theory'',
the difference being that a PROP assumes that the monoidal category is \emph{symmetric},
and a Lawvere theory further assumes that it is \emph{cartesian}.
These additional assumptions manifest themselves in the proof theory as the admissibility of certain structural rules:
in a PROP, the structural rule of \emph{exchange} is admissible,
and in a Lawvere theory \emph{weakening} and \emph{contraction} are admissible as well.

The distinguishing property of a theory making it ``cubical'', rather than of some other ``shape'',
is the presence of two generating morphisms, $d^0 , d^1 : 1 \to X$ representing the face maps (where ``$1$'' is the monoidal unit).
This is because combinatorially, an $n$-dimensional cube has $2 n$ many $(n - 1)$-cube faces, namely, two in each dimension.

\subsection{Test Categories}
\label{sec:testcategories}

In his epistolary research diary, \emph{Pursuing Stacks}, Grothendieck set out a program for the study of abstract homotopy.
The homotopy theory of topological spaces can be described as that of (weak) higher-dimensional groupoids.
Higher-dimensional groupoids also arise from combinatorially-presented structures, such as simplicial or, indeed, cubical sets, which can thus be seen as alternative presentations of the classical homotopy category, $\Hot$. That is, $\Hot$ arises either from the category of topological spaces, or from the category of simplicial sets, by inverting a class of morphisms (the weak equivalences).

Recall that for any small category $A$
and any cocomplete category $\mathcal E$, there is an equivalence between functors $i : A \to \mathcal E$ and adjunctions
\begin{equation}\label{eq:adjunction}
  \begin{tikzpicture}[commutative diagrams/every diagram]
    \matrix[matrix of math nodes, name=m, commutative diagrams/every cell] {
      \widehat A & \mathcal E \\};
    \path[commutative diagrams/.cd,every arrow,every label,shift right=0.4em]
      (m-1-1) to node[swap] {$i_!$} (m-1-2);
    \path[commutative diagrams/.cd,every arrow,every label,shift right=0.4em]
      (m-1-2) to node[swap] {$i^*$} (m-1-1);
    \path[commutative diagrams/.cd,every arrow,every label,phantom]
      (m-1-2) to node[commutative diagrams/description,scale=0.5] {$\top$} (m-1-1);
  \end{tikzpicture}
\end{equation}
where $i_!$ is the left Kan extension of $i$, and $i^*(X)(a) = \Hom_{\mathcal E}(i(a),X)$.\footnote{%
In this paper, all small categories are considered as strict categories, and $\Cat$ denotes the $1$-category of these.}
Here $\widehat A$ is the category of functors $A\op \to \Set$, also known as the presheaf topos of $A$.
The category of simplicial sets is defined as the presheaf category $\sSet=\widehat \Delta$, where $\Delta$ is the category of non-empty finite totally ordered sets and order-preserving maps.

The category of small categories, $\Cat$, likewise presents the homotopy category, by inverting the functors that become weak equivalences of simplicial sets after applying the nerve functor, which is the right adjoint functor $N=i^*$ coming from the inclusion $\Delta\hookrightarrow\Cat$.
Grothendieck realized that this allows us to compare the presheaf category $\widehat A$, for any small category $A$, with the homotopy category in a canonical way, via the functor $i_A : A \to \Cat$ given by $i_A(a) = A_{/a}$.
The left Kan extension is in this case also denoted $i_A$ and we have $i_A(X) = A_{/X}$ for $X$ in $\widehat A$.
He identified the notion of a \emph{test category}, which is one for which the homotopy category of its category of presheaves is equivalent to the homotopy category via the right adjoint $i_A^*$, after localization at the weak equivalences.
In other words, presheaves over a test category are models for homotopy types of spaces/$\infty$-groupoids.
In this way, the simplex category $\Delta$ is a test category, and the classical cube category of Serre and Kan, $\C_{(\cst w,\zstr)}$ in our notation, is also a test category.

One perceived benefit of the simplex category $\Delta$ is that the induced functor $\sSet \to \Hot$ preserves products, whereas the functor $\widehat{\C_{(\cst w,\zstr)}}\to\Hot$ does not.
A test category $A$ for which the functor $\widehat A\to\Hot$ preserves products is called a \emph{strict} test category.
We shall show that most natural cube categories are in fact strict test categories.

\section{Cube categories}
\label{sec:cubecategories}

In this paper, we consider as base categories the syntactic categories
for a range of monoidal theories, all capturing some aspect
of the notion of an interval. We call the resulting categories ``cube
categories'', because the monoidal powers of the interval then
correspond to cubes\footnote{Here we say ``cube'' for short instead
  of ``hypercube'' for an arbitrary dimensional power of an
  interval. A prefix will indicate the dimension, as in $0$-cube,
  $1$-cube, etc.}.

The generating morphisms for these cube categories can be classified either as \emph{structural},
natural families corresponding to structural rules of a proof theory,
or as \emph{algebraic}, distinguished by the property of having coarity one.
Our cube categories vary along three dimensions: the structural rules
present, the signature of algebraic function symbols, and the
equational theory.

This section is organized as follows: in subsection~\ref{sec:monoidal} we recall the basics of algebraic theories in monoidal categories, and in subsection~\ref{sec:interpretations} we discuss how monoidal languages are interpreted in monoidal categories.
Then we introduce in subsection~\ref{sec:languages} the monoidal languages underlying our cubical theories, so that in subsection~\ref{sec:canonicalcubes} we can introduce the standard interpretations of these and our cubical theories. In subsection~\ref{sec:menagerie} we give a tour of the resulting cube categories.

\subsection{Monoidal algebraic theories}
\label{sec:monoidal}

We assume the reader is familiar with ordinary algebraic theories and their categorical incarnations as Lawvere theories.
The idea of monoidal algebraic theories as described by \textcite{Mauri2005} is to generalize this to cases where only a subset of the structural rules (weakening, exchange and contraction) are needed to describe the axioms.
Think for example of the theory of monoids over a signature of a neutral element $1$ and a binary operation.
The axioms state that $1x=x=x1$ (in the context of one variable $x$) and $x(yz)=(xy)z$ (in the context of the variables $x$, $y$ and $z$).
Note that the terms in these equations each contain all the variables of the context and in the same order.
Thus, no structural rules are needed to form the equations, and hence the notion of a monoid makes sense in any monoidal category.

Let us now make this more precise.
For the structural rules, we follow Mauri (\emph{ibid.}) and consider
any subset of $\{\cst w,\cst e,\cst c\}$ ($\cst w$ for weakening,
$\cst e$ for exchange, and $\cst c$ for contraction), except that
whenever $\cst c$ is present, so is $\cst e$. Thus we consider the
following lattice of subsets of structural rules:
\begin{equation}\label{eq:lattice-str-rules}
  \begin{tikzpicture}[baseline=(e.north),node distance=5mm]
    \node(wec) {$\{\cst w,\cst e,\cst c\}$};
    \node(we) [below=of wec] {$\{\cst w,\cst e\}$};
    \node(w) [below=of we] {$\{\cst w\}$};
    \node(ec) [below left=of wec] {$\{\cst e,\cst c\}$};
    \node(e) [below=of ec] {$\{\cst e\}$};
    \node(p) [below=of e] {$\emptyset$};
    \draw (p)--(e)--(ec)--(wec);
    \draw (p)--(w)--(we)--(wec);
    \draw (e)--(we);
  \end{tikzpicture}
\end{equation}
It would be possible to consider monoidal theories over operations that have any number of incoming and outgoing edges, representing morphisms $f : A_1 \otimes \dots \otimes A_n \to B_1 \otimes \dots \otimes B_m$ in a monoidal category,
where the $A_i$ and $B_j$ are sorts.
However, it simplifies matters considerably, and suffices for our purposes, to consider only operations of the usual kind in algebra, with any arity of incoming edges and exactly one outgoing edge, as in $f: A_1 \otimes \dots \otimes A_n \to B$.%

Furthermore, we shall consider only single-sorted theories, defined over a signature $\Sigma$ given by a set of function symbols with arities.
For a single-sorted signature, the types can of course be identified with the natural numbers.

A (single-sorted, algebraic) \emph{monoidal language} consists of a pair of a set of structural rules together with an algebraic signature.
A term $t$ in a context of $n$ free variables $x_1,\dots,x_n$ is built up from the function symbols in such a way that
when we list the free variables in $t$ from left to right:
\begin{itemize}
\item every variables occurs unless $\cst w$ (weakening) is a structural rule;
\item the variables occur in order unless $\cst e$ (exchange) is a structural rule; and
\item there are no duplicated variables unless $\cst c$ (contraction) is a structural rule.
\end{itemize}
For the precise rules regarding term formation and the proof theory of equalities of terms, we refer to \textcite{Mauri2005}.

\subsection{Interpretations}
\label{sec:interpretations}

An interpretation of a monoidal language in a monoidal category
$(\mathcal E,1,\otimes)$ will consist of an object $X$ representing the single
sort, together with morphisms representing the structural rules and
morphisms representing the function symbols (including constants).

The structural rules $\cst w$, $\cst e$, $\cst c$ are interpreted
respectively by morphisms
\begin{align*}
  \varepsilon &: X \to 1 \\
  \tau &: X\otimes X \to X\otimes X \\
  \delta &: X \to X \otimes X
\end{align*}
satisfying certain laws \parencite[(27--36)]{Mauri2005}.
These laws specify also the interaction between these morphisms and the morphisms corresponding to the function symbols.
When $\mathcal E$ is symmetric monoidal, we
interpret $\tau$ by the braiding of $\mathcal E$, and when $\mathcal
E$ is cartesian monoidal, we interpret everything using the cartesian
structure.
A function symbol $f$ of arity $n$ is interpreted by a morphism $\abs f : X^{\otimes n} \to X$. This morphism and the structural morphisms are required to interact nicely, e.g., $\varepsilon\circ\abs f = \varepsilon^{\otimes n}$ (cf.~\emph{loc.~cit.}).

In a syntactic category for the empty theory (of which the syntactic
category for a non-empty theory is a quotient), every morphism factors
uniquely as a structural morphism followed by a functional one
(\emph{op.\ cit.}, Prop.~5.1).
When we impose a theory, we may of
course lose uniqueness, but we still have existence.


\subsection{Cubical monoidal languages}
\label{sec:languages}

All of our cubical signatures will include the two endpoints of the interval
as nullary function symbols, $0$ and $1$. For the rest, we consider
the two ``connections'' $\vee$ and $\wedge$, as well as the reversal,
indicated by a prime, ${}'$. This gives us the following lattice of
6 signatures:
\begin{equation}\label{eq:lattice-sig}
  \begin{tikzpicture}[baseline=(m.north),node distance=5mm]
    \node(mjr) {$(0,1,\vee,\wedge,{}')$};
    \node(mj) [below right=of mjr] {$(0,1,\vee,\wedge)$};
    \node(j) [below left=of mj] {$(0,1,\vee)$};
    \node(m) [below right=of mj] {$(0,1,\wedge)$};
    \node(b) [below left=of j] {$(0,1)$};
    \node(r) [above left=of b] {$(0,1,{}')$};
    \draw (b)--(r)--(mjr);
    \draw (b)--(j)--(mj)--(mjr);
    \draw (b)--(m)--(mj);
  \end{tikzpicture}
\end{equation}
Combined with the 6 possible combinations of structural rules, we are thus dealing with 36 distinct languages.

\begin{definition}\label{def:language}
  Let $L_{(a,b)}$, where $a$ is one of the six substrings of ``$\cst{wec}$'' corresponding to~\eqref{eq:lattice-str-rules} and $b$
  one of the six substrings of ``${\vee}{\wedge}{}'$'' corresponding to~\eqref{eq:lattice-sig}, denote the language with
  structural rules from $a$ and signature obtained from $(0,1)$ by
  expansion with the elements of $b$.
\end{definition}

We shall, however, mostly consider the 18 languages with weakening present (otherwise we are dealing with \emph{semi-cubical sets}).

The third dimension of variation for our cube categories is the
algebraic theory for a given language. Here we shall mostly pick the
theory of a particular standard structure, so let us pause our
discussion of cube categories to introduce our standard structures.

\subsection{The canonical cube categories}
\label{sec:canonicalcubes}

We consider the interval structures of the following objects in
the cartesian monoidal categories \Top\ and \Set\ (so
all structural rules are supported, interpreted by the global structure):
\begin{itemize}
\item the standard topological interval, $I=[0,1]$ in \Top;
\item the standard $2$-element set, $I=\bbtwo=\{0,1\}$ in \Set.
\end{itemize}
For the topological interval, we take $x\vee y=\max\{x,y\}$, $x\wedge
y=\min\{x,y\}$ and $x'=1-x$. These formulas also apply to the
$2$-element set $\bbtwo$.

We can also consider the $3$-element Kleene algebra (cf.~subsection~\ref{sec:menagerie})
$\bbthree=\{0,u,1\}$ with $u'=u$ and the $4$-element de~Morgan
algebra $\mathbb D=\{0,u,v,1\}$ with $u'=u$ and $v'=v$ in \Set\
(called the \emph{diamond}), as further structures. It is interesting
to note here that $\bbthree$ has the same theory as $[0,1]$ in the
full language, namely the theory of Kleene
algebras \parencite{Gehrke-et-al2003}. Of course, $\bbtwo$ gives us
the theory of Boolean algebras, while $\mathbb D$ gives the theory of
de~Morgan algebras.

\begin{definition}\label{def:cubecat}
  With $(a,b)$ as in Def.~\ref{def:language}, and $T$ a theory
  in the language $L_{(a,b)}$, let $\C_{(a,b)}(T)$ denote the
  syntactic category of the theory $T$. We write $\C_{(a,b)}$ for
  short for $\C_{(a,b)}(\mathrm{Th}([0,1]))$ for the syntactic
  category of the topological interval with respect to the monoidal
  language $L_{(a,b)}$.
\end{definition}
We say that $\C_{(a,b)}$ is the \emph{canonical} cube category for the
language $L_{(a,b)}$.
\begin{proposition}
  The canonical cube category $\C_{(a,b)}$ is isomorphic to the
  monoidal subcategory of $\Top$ generated by $[0,1]$ with respect to
  the language $L_{(a,b)}$.
\end{proposition}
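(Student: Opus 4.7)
The plan is to exhibit an interpretation functor from $\C_{(a,b)}$ into $\Top$ and argue that it is an isomorphism of categories onto its image, which will be precisely the monoidal subcategory of $\Top$ generated by $[0,1]$. By the discussion in subsection~\ref{sec:interpretations} and Definition~\ref{def:cubecat}, the topological interval $[0,1]$, equipped with the structural maps (diagonal, swap, projection to a point, depending on which rules are in $a$) and with the interpretations of $0$, $1$, $\vee$, $\wedge$, ${}'$ appearing in $b$ (using max, min, $1-(-)$), yields a model of the language $L_{(a,b)}$ in the cartesian monoidal category $\Top$. This model determines an interpretation functor $F : \C_{(a,b)}(L_{(a,b)}) \to \Top$ on the free syntactic category, sending $X$ to $[0,1]$ and $X^{\otimes n}$ to $[0,1]^n$.

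Next I would verify that $F$ descends to a functor $\bar F : \C_{(a,b)} \to \Top$. This is immediate from the definition $\C_{(a,b)} = \C_{(a,b)}(\mathrm{Th}([0,1]))$: the quotienting relation is generated by exactly those equations between parallel terms that are validated in the topological interval, and these are precisely the equations that $F$ must identify.

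I would then observe three things about $\bar F$. First, $\bar F$ is bijective on objects: both sides have a natural-number-indexed family of objects, $X^{\otimes n}$ on one side and $[0,1]^n$ on the other, and $\bar F$ matches them up. Second, $\bar F$ is faithful, which is essentially tautological. Two morphisms $s,t : X^{\otimes n} \to X^{\otimes m}$ of $\C_{(a,b)}$ are equal iff the equation $s = t$ is derivable from $\mathrm{Th}([0,1])$ in the proof theory of Mauri, which by the completeness of equational logic relative to the generating model holds iff $s$ and $t$ denote the same pair of tuples of continuous functions $[0,1]^n \to [0,1]^m$, i.e.\ iff $\bar F(s) = \bar F(t)$. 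Third, the essential image of $\bar F$ is by construction the monoidal subcategory of $\Top$ generated by $[0,1]$ together with the interpretations of the structural rules and function symbols of $L_{(a,b)}$, since every morphism of $\C_{(a,b)}$ is (by a term-representative) a monoidal composite of such generators, and conversely every such composite is in the image.

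The only subtle step is the faithfulness argument, which requires that the proof theory of Mauri be sound and complete for evaluation in a chosen model; soundness is part of the definition of the interpretation, and completeness here is simply the fact that $\mathrm{Th}([0,1])$ is defined to be the set of \emph{all} equations validated in $[0,1]$, so any two syntactic morphisms with the same denotation are identified in $\C_{(a,b)}$ by fiat. Combining these three observations gives that $\bar F$ is an isomorphism onto the monoidal subcategory of $\Top$ generated by $[0,1]$ with respect to $L_{(a,b)}$, as required.
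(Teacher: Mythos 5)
The paper offers no proof for this proposition, treating it as immediate from the definition of $\C_{(a,b)}$ as the syntactic category of the full theory $\mathrm{Th}([0,1])$; your proposal supplies exactly the expected unpacking (the interpretation functor descends, is bijective on objects $n \mapsto [0,1]^n$, is faithful because $\mathrm{Th}([0,1])$ is by definition deductively closed, and is full onto the generated subcategory). The argument is correct, and you rightly note at the end that no genuine ``completeness'' theorem is needed here, only soundness together with the fact that $\mathrm{Th}([0,1])$ already contains every equation valid in $[0,1]$.
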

In fact, since the forgetful functor $\Top\to\Set$ is faithful, we get
the same theory in every language whether we consider the interval
$[0,1]$ in $\Top$ or in $\Set$, and it generates the same monoidal
subcategory in either case.
\begin{proposition}
  If $L$ is a cubical language strictly smaller than the maximal
  language, $L_{(\cst{wec},\vee\wedge{}')}$, then the theory of
  the standard structure $[0,1]$ is the same as the theory of the
  standard structure $\bbtwo=\{0,1\}$ in $\Set$. For
  $L_{(\cst{wec},\vee\wedge{}')}$ the theory of $[0,1]$ equals the
  theory of the three-element Kleene algebra, $\bbthree$, and is in
  fact the theory of Kleene algebras.
\end{proposition}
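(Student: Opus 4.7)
The plan is to handle the two assertions separately, using standard variety-generation theorems.

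For the maximal-language claim, observe that $\{0, \tfrac{1}{2}, 1\} \subseteq [0,1]$ is closed under $\max$, $\min$, and $x \mapsto 1 - x$, hence forms a sub-Kleene-algebra isomorphic to $\bbthree$; this gives $\mathrm{Th}([0,1]) \subseteq \mathrm{Th}(\bbthree)$. The converse inclusion, together with the identification of both with the theory of Kleene algebras, follows from Kalman's representation theorem cited through \parencite{Gehrke-et-al2003}: every Kleene algebra embeds in a power of $\bbthree$, so the variety generated by $\bbthree$ equals the variety of Kleene algebras, and $[0,1]$ is itself a Kleene algebra. The three inclusions close into $\mathrm{Th}([0,1]) = \mathrm{Th}(\bbthree) = \mathrm{Th}(\text{Kleene algebras})$.

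For the sub-maximal claim, one direction is automatic: $\bbtwo = \{0,1\} \subseteq [0,1]$ is closed under every operation in every cubical signature (since $0' = 1$, $1' = 0$, and the lattice operations preserve $\{0,1\}$), so it is a subalgebra and $\mathrm{Th}([0,1]) \subseteq \mathrm{Th}(\bbtwo)$. For the reverse, I would case-split on the signature $b$. If $b$ omits reversal, every term is monotone in each variable, and the theory of $[0,1]$ is a reduct of the theory of bounded distributive lattices, which by Birkhoff's subdirect-representation theorem is generated by the two-chain $\bbtwo$. If $b$ contains reversal but is not full, the lattice of signatures~\eqref{eq:lattice-sig} forces $b = (0,1,{}')$; here every term reduces modulo $x'' = x$ to one of $x_i$, $x_i'$, $0$, $1$, and no nontrivial equation between these holds in either structure, as one checks by hand.

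The main obstacle is the case where $b$ is the full signature but $a \subsetneq \cst{wec}$, in which the Kleene-vs-Boolean distinction becomes active. The key observation is that the Boolean-specific equations that distinguish $\bbtwo$ from $[0,1]$ in the maximal theory---typified by $x \vee x' = 1$ and $x \wedge x' = 0$---require $\cst c$ in order to duplicate a variable. When $\cst c \notin a$, every term is linear in each variable, so such equations are not formulable, and a linear Kleene term can be reduced to a monotone linear lattice polynomial by absorbing each reversal into a relabeled variable, to which the distributive-lattice argument then applies. The subtlest sub-case is $a = \{\cst e, \cst c\}$, where contraction is present but weakening is not; here a careful accounting is needed of which same-context equations are formulable without the ability to weaken constants into non-empty contexts, and this is where the bulk of the delicate work lies.
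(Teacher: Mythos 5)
Your proposal follows the same broad decomposition as the paper (subalgebra gives one inclusion for free; case-split on the presence of reversal and of contraction; cite \textcite{Gehrke-et-al2003} for the Kleene case), but you have stopped exactly where the proof actually breaks. You defer the sub-case $a = \{\cst e, \cst c\}$, $b = ({\vee}{\wedge}')$ as ``delicate work'', and this is not a matter of delicacy: the proposition, read literally, is \emph{false} for $L_{(\cst{ec},{\vee}{\wedge}')}$. Consider the equation $x \wedge x' \wedge y = x \wedge x' \wedge y'$. Both sides use both context variables at least once (so it is a legal $\cst{ec}$-equation, with no weakening needed anywhere), it holds in $\bbtwo$ because $x \wedge x' = 0$ there, and it fails in $[0,1]$ at $x = \tfrac12$, $y = 0$, where the two sides are $0$ and $\tfrac12$. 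So $\mathrm{Th}(\bbtwo) \not\subseteq \mathrm{Th}([0,1])$ in that language. The paper's own proof silently has this gap as well: its two cases (``no reversal'', ``no contraction'') do not cover $a \supseteq \{\cst e, \cst c\}$ with $b \ni {}'$. For $(\cst{ec},')$ and $(\cst{wec},')$ nothing goes wrong because the signature has no binary operation with which to combine $x$ and $x'$, but for $(\cst{ec},{\vee}{\wedge}')$ the statement needs to be repaired, presumably by restricting to the eighteen languages with weakening, which is what the paper says it will ``mostly consider'' just after Definition~\ref{def:language}; you should have flagged this rather than suggesting the case can be pushed through.

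Two smaller points. First, in the no-contraction case you write that a linear Kleene term ``can be reduced to a monotone linear lattice polynomial by absorbing each reversal into a relabeled variable''; this works for a single term but not for an equation, because $s$ and $t$ might a priori read some variable with opposite polarities, and then the relabeling is inconsistent. The paper supplies the missing argument: after pushing reversals to the leaves and applying absorption, one shows that each normalized term genuinely depends (over $\bbtwo$) on each of its surviving variables, and since the terms are then monotone in the relabeled variables, agreement over $\bbtwo$ forces $s$ and $t$ to use each variable with the same polarity, after which relabeling is sound. Second, your appeal to Birkhoff for the no-reversal case is correct but heavier than needed; the paper's ``cut'' argument (two-valued lattice homomorphisms $[0,1] \to \bbtwo$ separate any two points) is the same idea at the level of the two structures at hand, without routing through subdirect irreducibility.
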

\begin{proof}
  If $L$ does not have reversal, then we can find for any points $a<b$ in $[0,1]$ a homomorphism $f : [0,1]\to\bbtwo$ in \Set\ with $f(a)<f(b)$ (cut between $a$ and $b$), so $[0,1]$ and $\bbtwo$ have the same theory.
  
  If $L$ does not have contraction, each term is monotone in each variable.
  By pushing reversals towards the variables and applying the absorption laws, we can find for each resulting term $s[x,y_1,\dots y_n]$ instantiations $b_1,\dots,b_n\in\bbtwo$ with $s[0,b_1,\dots,b_n] \ne s[1,b_1,\dots,b_n]$.
  It follows that if two terms agree on instantiations in $\bbtwo$, then they use each variable the same way: either with or without a reversal.
  Hence we can ignore the reversals and use the previous case.
  
  The last assertion is proved by \textcite{Gehrke-et-al2003}.\qed
\end{proof}
\begin{corollary}
  For each canonical cube category $\C_{(a,b)}$ we have decidable equality of terms.
\end{corollary}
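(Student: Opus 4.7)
The plan is to reduce decidable equality of morphisms in $\C_{(a,b)}$ to evaluation in a finite structure, using the preceding proposition. Morphisms $X^{\otimes n}\to X^{\otimes m}$ in the syntactic category correspond (componentwise in the codomain) to equivalence classes of terms of the language $L_{(a,b)}$ in a context of $n$ variables, modulo the theory of the standard structure. So it suffices to decide equality of two such terms.

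First I would observe that by the proposition, the theory of $\C_{(a,b)}$ coincides with the theory of a finite interval structure, namely $\bbtwo$ in every case except the maximal language $L_{(\cst{wec},\vee\wedge{}')}$, where it is the theory of $\bbthree$. In either case the standard structure has finite underlying set (of size $2$ or $3$). Then, since the number of variables $n$ in a given context is finite, the set of valuations $\{0,1\}^n$ (resp.\ $\{0,u,1\}^n$) is finite, and evaluating a given term at each valuation is a routine finite computation (walk the term tree, applying the finite operation tables of the structure). Two terms are then equal in the theory if and only if they evaluate to the same function under all such valuations; this gives a terminating decision procedure.

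The main step is just to note that this ``semantic'' decision procedure is correct, which is exactly the content of the previous proposition: completeness of the standard structure ($\bbtwo$ or $\bbthree$) for the equational theory over the language $L_{(a,b)}$. Soundness (evaluation respects provable equality) is immediate from the fact that $\C_{(a,b)}$ is defined as the syntactic category of the theory of this standard structure, so any provable equation must hold when the operations are interpreted by their standard meanings.

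There is no substantive obstacle here; the corollary is essentially a direct consequence of the proposition together with the finiteness of $\bbtwo$ and $\bbthree$. The only mild care required is to separate a morphism with codomain $X^{\otimes m}$ into its $m$ components (since the theories are single-sorted algebraic with coarity one) so that the semantic evaluation procedure, which is phrased for terms, applies directly.
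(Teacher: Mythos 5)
Your proposal is correct and takes essentially the same approach as the paper: reduce to decidability of the equational theory of the relevant finite interval structure ($\bbtwo$ or $\bbthree$, by the preceding proposition), then decide by the ``truth-table'' method of evaluating both terms at all finitely many valuations. The paper's proof is phrased slightly more tersely, appealing to decidability of equality in Kleene algebras via truth-tables over $\bbthree$ (which also covers the sub-language cases, since for those the theory agrees with that of $\bbtwo \subset \bbthree$), whereas you treat the two cases explicitly and spell out the componentwise reduction of morphisms to terms; these are only presentational differences.
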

\begin{proof}
  This follows because equality is decidable in the theory of Kleene algebras, as this theory is characterized by the object $\bbthree$.
  Hence we can decide equality by the method of ``truth-tables'', however, relative to the elements of $\bbthree$.\qed
\end{proof}
Of course, any algorithm for equality in the theory of Kleene algebras can be used, including the one based on disjunctive normal forms (\emph{op.~cit.}).

We can in the presence of weakening quite simply give explicit axiomatizations for each of the
canonical cube categories $\C_{(a,b)}$: take the subset of axioms in
Tab.~\ref{tab:axioms} that make sense in the language $L_{(a,b)}$.
Without weakening, this will not suffice, as we should need infinitely many axioms to ensure for instance that $s \wedge 0 = t \wedge 0$ where $s,t$ are terms in the same variable context.
\begin{table}[t]
  \centering
  \setlength{\tabcolsep}{6pt}
  \begin{tabular}{ccl} \toprule
    Axiom & Lang.\ req.{} & Name \\ \midrule
    $x \vee (y \vee z) = (x \vee y) \vee z$ & $(\zstr,\vee)$ & $\vee$-associativity \\
    $0\vee x=x=x\vee 0$ & $(\zstr,\vee)$ & $\vee$-unit \\
    $1\vee x=1=x\vee 1$ & $(\cst w,\vee)$ & $\vee$-absorption \\
    $x\vee y=y\vee x$ & $(\cst e,\vee)$ & $\vee$-symmetry \\
    $x\vee x=x$ & $(\cst{ec},\vee)$ & $\vee$-idempotence \\ \midrule
    $x \wedge (y \wedge z) = (x \wedge y) \wedge z$ & $(\zstr,\wedge)$ & $\wedge$-associativity \\
    $1\wedge x=x=x\wedge 1$ & $(\zstr,\wedge)$ & $\wedge$-unit \\
    $0\wedge x=0=x\wedge 0$ & $(\cst w,\wedge)$ & $\wedge$-absorption \\
    $x\wedge y=y\wedge x$ & $(\cst e,\wedge)$ & $\wedge$-symmetry \\
    $x\wedge x=x$ & $(\cst{ec},\wedge)$ & $\wedge$-idempotence \\ \midrule
    $x''=x$ & $(\zstr,{}')$ & ${}'$-involution \\
    $0'=1$ & $(\zstr,{}')$ & ${}'$-computation \\ \midrule
    $x\wedge(y\vee z)=(x\wedge y)\vee(x\wedge z)$ & $(\cst{ec},\vee\wedge)$ 
                                                  & distributive law 1 \\
    $x\vee(y\wedge z)=(x\vee y)\wedge(x\vee z)$ & $(\cst{ec},\vee\wedge)$ 
                                                  & distributive law 2 \\
    $x=x\vee(x\wedge y)=x\wedge(x\vee y)$ & $(\cst{wec},\vee\wedge)$
                                          & lattice-absorption \\ 
    \midrule
    $(x\vee y)'=x'\wedge y'$ & $(\zstr,\vee\wedge')$ & de~Morgan's law \\
    $x\wedge x'\le y\vee y'$ & $(\cst{wec},\vee\wedge')$ & Kleene's law \\
    \bottomrule
  \end{tabular}
  \vspace{6pt}

  \caption{Cubical axioms}
  \label{tab:axioms}
\end{table}

When we compare our canonical cube categories to the categories of \textcite{Grandis-Mauri2003}, we find
$\mathbb I=\C_{(\cst w,\zstr)}$, $\mathbb J=\C_{(\cst w,\vee\wedge)}$,
$\mathbb K=\C_{(\cst{we},\vee\wedge)}$, and
$!\mathbb K=\C_{(\cst{we},\vee\wedge')}$.

For the full language, $L_{(\cst{wec},{\vee}{\wedge}')}$, there are two additional interesting cube categories: $\C_{\mathrm{dM}}$ (de~Morgan algebras) and $\C_{\mathrm{BA}}$ (boolean algebras), where de~Morgan algebras satisfy all the laws of Tab.~\ref{tab:axioms} except Kleene's law, and boolean algebras of course satisfy additionally the law $x\vee x'=1$. The case of de~Morgan algebras is noteworthy as the basis of the cubical model of type theory of \textcite{Cohen-Coquand-Huber-Mortberg2016}.

\begin{definition}
  For each of our cube categories $\C$, we write $[n]$ for the
  object representing a context of $n$ variables, and we write
  $\square^n$ for the image of $[n]$ under the Yoneda embedding
  $\yo: \C\hookrightarrow\widehat\C$.
\end{definition}

\subsection{A tour of the menagerie}
\label{sec:menagerie}

\subsubsection{Plain cubes} \label{sec:plain_cubes}

In the canonical cube category for the language $L_{(\cst{w},\zstr)}$ the monoidal unit $[0]$ is terminal
and the interpretation of weakening is generated by the unique degeneracy map $\varepsilon : \hom{[1]}{[0]}$.
The points $0$ and $1$ are interpreted by face maps $\eta_0 , \eta_1 : \hom{[0]}{[1]}$.
In the following we let $\{i,j\}$ range over $\{0,1\}$ with the assumption that $i \not= j$.

Because $[0]$ is terminal we have the \define[face-degeneracy law]{face-degeneracy laws},
$\varepsilon \circ \eta_i = \id{[0]}$,
which we represent string-diagrammatically as:
\[
\begin{tikzpicture}[string diagram , baseline=(current bounding box.center)]
	\coordinate (tl) at (-0.6 , 0.6) ;
	\coordinate (br) at (0.6 , -0.6) ;
	\path [draw , clip] (tl) rectangle (br) ;
	\coordinate (eta) at ({$ (tl) ! 1/2 ! (br) $} |- {$ (tl) ! 1/4 ! (br) $}) ;
	\coordinate (eps) at ({$ (tl) ! 1/2 ! (br) $} |- {$ (tl) ! 3/4 ! (br) $}) ;
	\draw (eta) to [out = south , in = north] (eps) ;
	\node [2-cell] at (eta) {\eta_i} ;
	\node [2-cell] at (eps) {\varepsilon} ;
\end{tikzpicture}
\quad = \quad
\begin{tikzpicture}[string diagram , baseline=(current bounding box.center)]
	\coordinate (tl) at (-0.6 , 0.6) ;
	\coordinate (br) at (0.6 , -0.6) ;
	\path [draw , clip] (tl) rectangle (br) ;
\end{tikzpicture}
\]
This law is invisible in the algebraic notation.

In a presheaf $T$, the face maps give rise to reindexing functions between the fibers,
$\partial ^i _k : \hom{T [n+1]}{T [n]}$, where $1 \le k \le n+1$.
In particular, when $n$ is $0$ these pick out the respective boundary points of an interval.
Similarly, the degeneracy map gives rise to reindexing functions between the fibers,
$\degen_k : \hom{T [n]}{T [n + 1]}$, where $1 \le k \le n+1$.
In particular, when $n$ is $0$ this determines a degenerate interval $\degen(a)\in T[1]$ on a point $a\in T[0]$,
and the \refer[face-degeneracy law]{face-degeneracy laws} tell us its boundary points: $\partial^0(\degen(a))=\partial^1(\degen(a))=a$.

Adding the structural law of exchange to the language adds a natural isomorphism $\tau : \hom{[2]}{[2]}$ to the syntactic category.
In a presheaf, this lets us permute any adjacent pair of a cube's dimensions by reflecting it across the corresponding diagonal (hyper)plane.
In particular, $T (\tau)$ reflects a square across its main diagonal.

\subsubsection{Cubes with diagonals} \label{sec:diagonal_cubes}

In the canonical cube category for the language $L_{(\cst{wec},\zstr)}$ the monoidal product is cartesian
and the interpretation of contraction is generated by the diagonal map $\delta : \hom{[1]}{[2]}$.
In this case, the pair $(\delta , \varepsilon)$ forms a cocommutative comonoid.

Because $\delta$ is natural we have the \define[face-diagonal law]{face-diagonal laws},
$\delta \circ \eta_i = \eta_i \otimes \eta_i$,
which we represent string-diagrammatically as:
\[
\begin{tikzpicture}[string diagram , baseline=(current bounding box.center)]
	\coordinate (tl) at (-0.6 , 0.6) ;
	\coordinate (br) at (0.6 , -0.6) ;
	\path [draw , clip] (tl) rectangle (br) ;
	\coordinate (eta) at ({$ (tl) ! 1/2 ! (br) $} |- {$ (tl) ! 5/20 ! (br) $}) ;
	\coordinate (del) at ({$ (tl) ! 1/2 ! (br) $} |- {$ (tl) ! 14/20 ! (br) $}) ;
	\coordinate (out1) at ({$ (tl) ! 1/6 ! (br) $} |- br) ;
	\coordinate (out2) at ({$ (tl) ! 5/6 ! (br) $} |- br) ;
	\draw (eta) to [out = south , in = north] (del) ;
	\draw (del) to [out = west , in = north] (out1) ;
	\draw (del) to [out = east , in = north] (out2) ;
	\node [2-cell] at (eta) {\eta_i} ;
	\node [2-cell] at (del) {\delta} ;
\end{tikzpicture}
\quad = \quad
\begin{tikzpicture}[string diagram , baseline=(current bounding box.center)]
	\coordinate (tl) at (-0.6 , 0.6) ;
	\coordinate (br) at (0.6 , -0.6) ;
	\path [draw , clip] (tl) rectangle (br) ;
	\coordinate (eta1) at ({$ (tl) ! 1/4 ! (br) $} |- {$ (tl) ! 1/2 ! (br) $}) ;
	\coordinate (eta2) at ({$ (tl) ! 3/4 ! (br) $} |- {$ (tl) ! 1/2 ! (br) $}) ;
	\coordinate (out1) at (eta1 |- br) ;
	\coordinate (out2) at (eta2 |- br) ;
	\draw (eta1) to [out = south , in = north] (out1) ;
	\draw (eta2) to [out = south , in = north] (out2) ;
	\node [2-cell] at (eta1) {\eta_i} ;
	\node [2-cell] at (eta2) {\eta_i} ;
\end{tikzpicture}
\]

In a presheaf $T$, the diagonal map gives rise to reindexing functions between the fibers,
$d _k : \hom{T [n+2]}{T [n+1]}$, where $1 \le k \le n+1$.
In particular, when $n$ is $0$ this picks out the main diagonal of a square,
and the \refer[face-diagonal law]{face-diagonal laws} tell us its boundary points.

In the cubical semantics, the fact that $\varepsilon$ is a comonoid counit for $\delta$ tells us
that the diagonal of a square formed by degenerating an interval is just that interval.
Likewise, the fact that $\delta$ is coassociative tells us
that the main diagonal interval of a higher-dimensional cube is well-defined.

\subsubsection{Cubes with reversals} \label{sec:reversal_cubes}

In the canonical cube category for the language $L_{(\zstr, \cst{'})}$ we have an involutive reversal map $\rho : \hom{[1]}{[1]}$.
A reversal acts by swapping the face maps, giving the \define[face-reversal law]{face-reversal laws},
$\rho \circ \eta_i = \eta_j$,
which we represent string-diagrammatically as:
\[
\begin{tikzpicture}[string diagram , baseline=(current bounding box.center)]
	\coordinate (tl) at (-0.6 , 0.6) ;
	\coordinate (br) at (0.6 , -0.6) ;
	\path [draw , clip] (tl) rectangle (br) ;
	\coordinate (eta) at ({$ (tl) ! 1/2 ! (br) $} |- {$ (tl) ! 1/5 ! (br) $}) ;
	\coordinate (out) at (eta |- br) ;
	\draw (eta) to [out = south , in = north] coordinate [pos = 3/5] (rho) (out) ;
	\node [2-cell] at (eta) {\eta_i} ;
	\node [diamond] at (rho) {\rho} ;
\end{tikzpicture}
\quad = \quad
\begin{tikzpicture}[string diagram , baseline=(current bounding box.center)]
	\coordinate (tl) at (-0.6 , 0.6) ;
	\coordinate (br) at (0.6 , -0.6) ;
	\path [draw , clip] (tl) rectangle (br) ;
	\coordinate (eta) at ({$ (tl) ! 1/2 ! (br) $} |- {$ (tl) ! 2/5 ! (br) $}) ;
	\coordinate (out) at (eta |- br) ;
	\draw (eta) to [out = south , in = north] (out) ;
	\node [2-cell] at (eta) {\eta_j} ;
\end{tikzpicture}
\]
This embodies the equations $0'=1$ and $1'=0$.

In a presheaf $T$, the reversal map gives rise to endomorphisms on the fibers,
$! _k : \hom{T [n+1]}{T [n+1]}$, where $1 \le k \le n+1$.
In particular, when $n$ is $0$ this reverses an interval,
and the \refer[face-reversal law]{face-reversal laws} tell us its boundary points.

If the signature contains weakening, then we have the \define{reversal-degeneracy law},
$\varepsilon \circ \rho = \varepsilon$.
In the cubical interpretation, this says that a degenerate interval is invariant under reversal.
If the signature contains contraction, then we have the \define{reversal-diagonal law},
$\delta \circ \rho = (\rho \otimes \rho) \circ \delta$.
Cubically, this says that the reversal of a square's diagonal is the diagonal of the square resulting from reversing in each dimension.

\subsubsection{Cubes with connections} \label{sec:connected_cubes}

In the canonical cube category for the language $L_{(\cst{w},\vee\wedge)}$,
the connectives are interpreted by connection maps $\mu_0 , \mu_1 : \hom{[2]}{[1]}$.
Each pair $(\mu_i , \eta_i)$ forms a monoid.
Furthermore, the unit for each monoid is an absorbing element for the other:
$\mu_j \circ (\eta_i \otimes \id{}) = \eta_i \circ \varepsilon = \mu_j \circ (\id{} \otimes \eta_i)$.
\textcite{Grandis-Mauri2003} call such structures ``dioids'',
so we refer to these as the \define[dioid absorption law] {dioid absorption laws},
and represent them string-diagrammatically as:
\[
\begin{tikzpicture}[string diagram , baseline=(current bounding box.center)]
	\coordinate (tl) at (-0.6 , 0.6) ;
	\coordinate (br) at (0.6 , -0.6) ;
	\path [draw , clip] (tl) rectangle (br) ;
	\coordinate (eta) at ({$ (tl) ! 1/5 ! (br) $} |- {$ (tl) ! 1/4 ! (br) $}) ;
	\coordinate (mu) at ({$ (tl) ! 1/2 ! (br) $} |- {$ (tl) ! 3/4 ! (br) $}) ;
	\coordinate (in) at ({$ (tl) ! 7/8 ! (br) $} |- tl) ;
	\coordinate (out) at (mu |- br) ;
	\draw (in) to [out = south , in = east] (mu) ;
	\draw (eta) to [out = south , in = west] (mu) ;
	\draw (mu) to [out = south , in = north] (out) ;
	\node [2-cell] at (eta) {\eta_i} ;
	\node [2-cell] at (mu) {\mu_j} ;
\end{tikzpicture}
\quad = \quad
\begin{tikzpicture}[string diagram , baseline=(current bounding box.center)]
	\coordinate (tl) at (-0.6 , 0.6) ;
	\coordinate (br) at (0.6 , -0.6) ;
	\path [draw , clip] (tl) rectangle (br) ;
	\coordinate (epsilon) at ({$ (tl) ! 1/2 ! (br) $} |- {$ (tl) ! 3/10 ! (br) $}) ;
	\coordinate (eta) at ({$ (tl) ! 1/2 ! (br) $} |- {$ (tl) ! 7/10 ! (br) $}) ;
	\coordinate (in) at (epsilon |- tl) ;
	\coordinate (out) at (eta |- br) ;
	\draw (in) to [out = south , in = north] (epsilon) ;
	\draw (eta) to [out = south , in = north] (out) ;
	\node [2-cell] at (epsilon) {\varepsilon} ;
	\node [2-cell] at (eta) {\eta_i} ;
\end{tikzpicture}
\quad = \quad
\begin{tikzpicture}[string diagram , baseline=(current bounding box.center)]
	\coordinate (tl) at (-0.6 , 0.6) ;
	\coordinate (br) at (0.6 , -0.6) ;
	\path [draw , clip] (tl) rectangle (br) ;
	\coordinate (eta) at ({$ (tl) ! 4/5 ! (br) $} |- {$ (tl) ! 1/4 ! (br) $}) ;
	\coordinate (mu) at ({$ (tl) ! 1/2 ! (br) $} |- {$ (tl) ! 3/4 ! (br) $}) ;
	\coordinate (in) at ({$ (tl) ! 1/8 ! (br) $} |- tl) ;
	\coordinate (out) at (mu |- br) ;
	\draw (in) to [out = south , in = west] (mu) ;
	\draw (eta) to [out = south , in = east] (mu) ;
	\draw (mu) to [out = south , in = north] (out) ;
	\node [2-cell] at (eta) {\eta_i} ;
	\node [2-cell] at (mu) {\mu_j} ;
\end{tikzpicture}
\]
Equationally, these amount to the ${\vee}$- and ${\wedge}$-absorption laws of Tab.~\ref{tab:axioms}.

In the cubical semantics, connections can be seen as a variant form of degeneracy,
identifying adjacent, rather than opposite, faces of a cube.
Of course in order for this to make sense, a cube must have at least two dimensions.
In a presheaf $T$, the connection maps give rise to reindexing functions between the fibers,
$\ulcorner _k , \lrcorner _k : \hom{T [n+1]}{T [n+2]}$, where $1 \le k \le n+1$.
In particular, when $n$ is $0$ these act as follows:
\[
\begin{tikzpicture}[math diagram , baseline=(current bounding box.center)]
	\coordinate (cl) at (-4.0 , 0.0) ;
	\coordinate (cc) at (0.0 , 0.0) ;
	\coordinate (cr) at (4.0 , 0.0) ;
	\node (ne0) at ($ (cl) + (-0.5 , 0.5) $) {a} ;
	\node (nw0) at ($ (cl) + (0.5 , 0.5) $) {b} ;
	\node (se0) at ($ (cl) + (-0.5 , -0.5) $) {b} ;
	\node (sw0) at ($ (cl) + (0.5 , -0.5) $) {b} ;
	\node (kappa0) at (cl) {f \lrcorner} ;
	\node (l) at ($ (cc) + (-0.75 , 0) $) {a} ;
	\node (r) at ($ (cc) + (0.75 , 0) $) {b} ;
	\node (ne1) at ($ (cr) + (-0.5 , 0.5) $) {a} ;
	\node (nw1) at ($ (cr) + (0.5 , 0.5) $) {a} ;
	\node (se1) at ($ (cr) + (-0.5 , -0.5) $) {a} ;
	\node (sw1) at ($ (cr) + (0.5 , -0.5) $) {b} ;
	\node (kappa1) at (cr) {\ulcorner f} ;
	\draw (l) to node [auto] (f) {f} (r) ;
	\draw (ne0) to node [auto] {f} (nw0) ;
	\draw (nw0) to node [auto] {\ast} (sw0) ;
	\draw (ne0) to node [swap] {f} (se0) ;
	\draw (se0) to node [swap] {\ast} (sw0) ;
	\draw (ne1) to node [auto] {\ast} (nw1) ;
	\draw (nw1) to node [auto] {f} (sw1) ;
	\draw (ne1) to node [swap] {\ast} (se1) ;
	\draw (se1) to node [swap] {f} (sw1) ;
	\draw [mapto] (f) to [bend right = 20] node [swap , pos = 2/5] {\lrcorner _1} (kappa0) ;
	\draw [mapto] (f) to [bend left = 20] node [auto , pos = 2/5] {\ulcorner _1} (kappa1) ;
\end{tikzpicture}
\]
It may help to think of the interval $f$ as a folded paper fan,
with its ``hinge'' at the domain end in the case of $\lrcorner$, and at the codomain end in the case of $\ulcorner$.
The respective connected squares are then obtained by ``opening the fan''.
The monoid unit laws give us the (generally) non-degenerate faces of a connected square,
while the \refer[dioid absorption law]{dioid absorption laws} give us the (necessarily) degenerate ones.
Monoid associativity says that multiply-connected higher-dimensional cubes are well-defined.

Because $[0]$ is terminal we also have the \define[connection-degeneracy law] {connection-degeneracy laws},
$\varepsilon \circ \mu_i = \varepsilon \otimes \varepsilon$,
which say that a connected square arising from a degenerate interval is a doubly-degenerate square.
In the presence of exchange, we further assume that the $\mu_i$ are commutative.
In the cubical semantics, this implies that connected cubes are invariant under reflection across their connected dimensions.

When adding contraction to the signature,
we must give a law for rewriting a diagonal following a connection such that the structural maps come first.
This is done by the \define[connection-diagonal law] {connection-diagonal laws},
$\delta \circ \mu_i = (\mu_i \otimes \mu_i) \circ (\id{} \otimes \tau \otimes \id{}) \circ (\delta \otimes \delta)$,
which we represent string-diagrammatically as:
\[
\begin{tikzpicture}[string diagram , baseline=(current bounding box.center)]
	\coordinate (tl) at (-0.6 , 0.6) ;
	\coordinate (br) at (0.6 , -0.6) ;
	\path [draw , clip] (tl) rectangle (br) ;
	\coordinate (mu) at ({$ (tl) ! 1/2 ! (br) $} |- {$ (tl) ! 1/4 ! (br) $}) ;
	\coordinate (del) at ({$ (tl) ! 1/2 ! (br) $} |- {$ (tl) ! 3/4 ! (br) $}) ;
	\coordinate (in1) at ({$ (tl) ! 1/5 ! (br) $} |- tl) ;
	\coordinate (in2) at ({$ (tl) ! 4/5 ! (br) $} |- tl) ;
	\coordinate (out1) at ({$ (tl) ! 1/5 ! (br) $} |- br) ;
	\coordinate (out2) at ({$ (tl) ! 4/5 ! (br) $} |- br) ;
	\draw (in1) to [out = south , in = west] (mu) ;
	\draw (in2) to [out = south , in = east] (mu) ;
	\draw (mu) to [out = south , in = north] (del) ;
	\draw (del) to [out = west , in = north] (out1) ;
	\draw (del) to [out = east , in = north] (out2) ;
	\node [2-cell] at (mu) {\mu_i} ;
	\node [2-cell] at (del) {\delta} ;
\end{tikzpicture}
\quad = \quad
\begin{tikzpicture}[string diagram , baseline=(current bounding box.center)]
	\coordinate (tl) at (-0.8 , 0.6) ;
	\coordinate (br) at (0.8 , -0.6) ;
	\path [draw , clip] (tl) rectangle (br) ;
	\coordinate (del1) at ({$ (tl) ! 1/4 ! (br) $} |- {$ (tl) ! 1/4 ! (br) $}) ;
	\coordinate (del2) at ({$ (tl) ! 3/4 ! (br) $} |- {$ (tl) ! 1/4 ! (br) $}) ;
	\coordinate (mu1) at ({$ (tl) ! 1/4 ! (br) $} |- {$ (tl) ! 3/4 ! (br) $}) ;
	\coordinate (mu2) at ({$ (tl) ! 3/4 ! (br) $} |- {$ (tl) ! 3/4 ! (br) $}) ;
	\coordinate (in1) at (del1 |- tl) ;
	\coordinate (in2) at (del2 |- tl) ;
	\coordinate (out1) at (mu1 |- br) ;
	\coordinate (out2) at (mu2 |- br) ;
	\draw (in1) to [out = south , in = north] (del1) ;
	\draw (in2) to [out = south , in = north] (del2) ;
	\draw (del1) to [out = west , in = west , looseness = 1.6] (mu1) ;
	\draw (del1) to [out = east , in = west] (mu2) ;
	\draw (del2) to [out = west , in = east] (mu1) ;
	\draw (del2) to [out = east , in = east , looseness = 1.6] (mu2) ;
	\draw (mu1) to [out = south , in = north] (out1) ;
	\draw (mu2) to [out = south , in = north] (out2) ;
	\node [2-cell] at (del1) {\delta} ;
	\node [2-cell] at (del2) {\delta} ;
	\node [2-cell] at (mu1) {\mu_i} ;
	\node [2-cell] at (mu2) {\mu_i} ;
\end{tikzpicture}
\]
Algebraically, this says that two copies of a conjunction consists of a pair of conjunctions, each on copies of the respective terms;
cubically, it gives the connected square on the diagonal interval of another square
in terms of a product of diagonals in the $4$-cube resulting from connecting each dimension separately.
The whole structure, then, is a pair of bicommutative bimonoids, $(\delta , \varepsilon , \mu_i , \eta_i)$
related by the \refer[dioid absorption law]{dioid absorption laws}.
We refer to these as \define[linked bimonoid]{linked bimonoids}.

Additionally, we may impose the laws of bounded, modular, or distributive lattices,
each of which implies its predecessors, and all of which imply the \define[diagonal-connection law] {diagonal-connection laws},
$\mu_i \circ \delta = \id{[1]}$, known in the literature (rather generically) as ``special'' laws.
These correspond to the ${\vee}$- and ${\wedge}$-idempotence laws of Tab.~\ref{tab:axioms}.

\subsubsection{The full signature} \label{sec:full_signature}

When both reversals and connections are present, the \define[de Morgan law]{de~Morgan laws},
$\rho \circ \mu_i = \mu_j \circ (\rho \otimes \rho)$ permute reversals before connections:
\[
\begin{tikzpicture}[string diagram , baseline=(current bounding box.center)]
	\coordinate (tl) at (-0.75 , 0.75) ;
	\coordinate (br) at (0.75 , -0.75) ;
	\path [draw , clip] (tl) rectangle (br) ;
	\coordinate (mu) at ({$ (tl) ! 1/2 ! (br) $} |- {$ (tl) ! 3/10 ! (br) $}) ;
	\coordinate (in1) at ({$ (tl) ! 1/5 ! (br) $} |- tl) ;
	\coordinate (in2) at ({$ (tl) ! 4/5 ! (br) $} |- tl) ;
	\coordinate (out) at (mu |- br) ;
	\draw (in1) to [out = south , in = west] (mu) ;
	\draw (in2) to [out = south , in = east] (mu) ;
	\draw (mu) to [out = south , in = north] coordinate [pos = 3/5] (rho) (out) ;
	\node [2-cell] at (mu) {\mu_i} ;
	\node [diamond] at (rho) {\rho} ;
\end{tikzpicture}
\quad = \quad
\begin{tikzpicture}[string diagram , baseline=(current bounding box.center)]
	\coordinate (tl) at (-1.0 , 0.75) ;
	\coordinate (br) at (1.0 , -0.75) ;
	\path [draw , clip] (tl) rectangle (br) ;
	\coordinate (mu) at ({$ (tl) ! 1/2 ! (br) $} |- {$ (tl) ! 2/3 ! (br) $}) ;
	\coordinate (in1) at ({$ (tl) ! 1/5 ! (br) $} |- tl) ;
	\coordinate (in2) at ({$ (tl) ! 4/5 ! (br) $} |- tl) ;
	\coordinate (out) at (mu |- br) ;
	\draw (in1) to [out = south , in = west] coordinate [pos = 1/3] (rho1) (mu) ;
	\draw (in2) to [out = south , in = east] coordinate [pos = 1/3] (rho2) (mu) ;
	\draw (mu) to [out = south , in = north] (out) ;
	\node [2-cell] at (mu) {\mu_j} ;
	\node [diamond] at (rho1) {\rho} ;
	\node [diamond] at (rho2) {\rho} ;
\end{tikzpicture}
\]
These laws imply each other as well as their nullary versions, the \refer[face-reversal law]{face-reversal laws}.

Using the algebraic characterization of order in a lattice, $x \wedge y = x  \Longleftrightarrow  x \le y  \Longleftrightarrow  x \vee y = y$,
we can express the \define[kleene law]{Kleene law} as
$\mu_i \circ (\mu_i \otimes \mu_j) \circ (\id{} \otimes \rho \otimes \id{} \otimes \rho) \circ (\delta \otimes \delta)
= \mu_i \circ (\id{} \otimes \rho) \circ (\delta \otimes \varepsilon)$:
\[
\begin{tikzpicture}[string diagram , baseline=(current bounding box.center)]
	\coordinate (tl) at (-1.25 , 1.0) ;
	\coordinate (br) at (1.25 , -1.0) ;
	\path [draw , clip] (tl) rectangle (br) ;
	\coordinate (delta1) at ({$ (tl) ! 1/4 ! (br) $} |- {$ (tl) ! 1/6 ! (br) $}) ;
	\coordinate (delta2) at ({$ (tl) ! 3/4 ! (br) $} |- {$ (tl) ! 1/6 ! (br) $}) ;
	\coordinate (mu1) at ({$ (tl) ! 1/4 ! (br) $} |- {$ (tl) ! 4/6 ! (br) $}) ;
	\coordinate (mu2) at ({$ (tl) ! 3/4 ! (br) $} |- {$ (tl) ! 4/6 ! (br) $}) ;
	\coordinate (mu3) at ({$ (tl) ! 1/2 ! (br) $} |- {$ (tl) ! 5/6 ! (br) $}) ;
	\coordinate (in1) at (delta1 |- tl) ;
	\coordinate (in2) at (delta2 |- tl) ;
	\coordinate (out) at (mu3 |- br) ;
	\draw (in1) to [out = south , in = north] (delta1) ;
	\draw (in2) to [out = south , in = north] (delta2) ;
	\draw (delta1) to [out = west , in = west] (mu1) ;
	\draw (delta1) to [out = east , in = east] coordinate [pos = 1/2] (rho1) (mu1) ;
	\draw (delta2) to [out = west , in = west] (mu2) ;
	\draw (delta2) to [out = east , in = east] coordinate [pos = 1/2] (rho2) (mu2) ;
	\draw (mu1) to [out = south , in = west] (mu3) ;
	\draw (mu2) to [out = south , in = east] (mu3) ;
	\draw (mu3) to [out = south , in = north] (out) ;
	\node [2-cell] at (delta1) {\delta} ;
	\node [2-cell] at (delta2) {\delta} ;
	\node [diamond] at (rho1) {\rho} ;
	\node [diamond] at (rho2) {\rho} ;
	\node [2-cell] at (mu1) {\mu_i} ;
	\node [2-cell] at (mu2) {\mu_j} ;
	\node [2-cell] at (mu3) {\mu_i} ;
\end{tikzpicture}
\quad = \quad
\begin{tikzpicture}[string diagram , baseline=(current bounding box.center)]
	\coordinate (tl) at (-1.0 , 1.0) ;
	\coordinate (br) at (1.0 , -1.0) ;
	\path [draw , clip] (tl) rectangle (br) ;
	\coordinate (delta) at ({$ (tl) ! 1/4 ! (br) $} |- {$ (tl) ! 1/4 ! (br) $}) ;
	\coordinate (epsilon) at ({$ (tl) ! 3/4 ! (br) $} |- {$ (tl) ! 1/4 ! (br) $}) ;
	\coordinate (mu) at ({$ (tl) ! 1/4 ! (br) $} |- {$ (tl) ! 3/4 ! (br) $}) ;
	\coordinate (in1) at (delta |- tl) ;
	\coordinate (in2) at (epsilon |- tl) ;
	\coordinate (out) at (mu |- br) ;
	\draw (in1) to [out = south , in = north] (delta) ;
	\draw (in2) to [out = south , in = north] (epsilon) ;
	\draw (delta) to [out = west , in = west] (mu) ;
	\draw (delta) to [out = east , in = east] coordinate [pos = 1/2] (rho) (mu) ;
	\draw (mu) to [out = south , in = north] (out) ;
	\node [2-cell] at (delta) {\delta} ;
	\node [2-cell] at (epsilon) {\varepsilon} ;
	\node [2-cell] at (mu) {\mu_i} ;
	\node [diamond] at (rho) {\rho} ;
\end{tikzpicture}
\]

Finally, we arrive at the structure of a boolean algebra by assuming the \define[hopf law]{Hopf laws},
$\mu_i \circ (\id{} \otimes \rho) \circ \delta = \eta_j \circ \varepsilon$:
\[
\begin{tikzpicture}[string diagram , baseline=(current bounding box.center)]
	\coordinate (tl) at (-0.75 , 0.75) ;
	\coordinate (br) at (0.75 , -0.75) ;
	\path [draw , clip] (tl) rectangle (br) ;
	\coordinate (delta) at ({$ (tl) ! 1/2 ! (br) $} |- {$ (tl) ! 1/5 ! (br) $}) ;
	\coordinate (mu) at ({$ (tl) ! 1/2 ! (br) $} |- {$ (tl) ! 4/5 ! (br) $}) ;
	\coordinate (in) at (delta |- tl) ;
	\coordinate (out) at (mu |- br) ;
	\draw (in) to [out = south , in = north] (delta) ;
	\draw (delta) to [out = west , in = west] (mu) ;
	\draw (delta) to [out = east , in = east] coordinate [pos = 1/2] (rho) (mu) ;
	\draw (mu) to [out = south , in = north] (out) ;
	\node [2-cell] at (delta) {\delta} ;
	\node [2-cell] at (mu) {\mu_i} ;
	\node [diamond] at (rho) {\rho} ;
\end{tikzpicture}
\quad = \quad
\begin{tikzpicture}[string diagram , baseline=(current bounding box.center)]
	\coordinate (tl) at (-0.75 , 0.75) ;
	\coordinate (br) at (0.75 , -0.75) ;
	\path [draw , clip] (tl) rectangle (br) ;
	\coordinate (epsilon) at ({$ (tl) ! 1/2 ! (br) $} |- {$ (tl) ! 1/4 ! (br) $}) ;
	\coordinate (eta) at ({$ (tl) ! 1/2 ! (br) $} |- {$ (tl) ! 3/4 ! (br) $}) ;
	\coordinate (in) at (eta |- tl) ;
	\coordinate (out) at (eta |- br) ;
	\draw (in) to [out = south , in = north] (epsilon) ;
	\draw (eta) to [out = south , in = north] (out) ;
	\node [2-cell] at (epsilon) {\varepsilon} ;
	\node [2-cell] at (eta) {\eta_j} ;
\end{tikzpicture}
\]
Cubically, these say that the anti-diagonal of a connected square is degenerate.

\section{Test categories}
\label{sec:test-categories}

As mentioned in the introduction, a test category is a small category
$A$ such that the presheaf category $\widehat A$ can model the
homotopy category after a canonical localization. Here we recall the
precise definitions (cf.~\textcite{Maltsiniotis2005}).

The inclusion functor $i : \Delta \to \Cat$ induces via~\eqref{eq:adjunction} an adjunction $i_! : \widehat\Delta \leftrightarrows \Cat : N$ where the right adjoint $N$ is the \emph{nerve} functor.
This allows us to transfer the homotopy theory of simplicial sets to the setting of (strict) small categories, in that we define a functor $f : A \to B$ to be a \emph{weak equivalence} if $N(f)$ is a weak equivalence of simplicial sets.
A small category $A$ is called \emph{aspheric} (or \emph{weakly contractible}) if the canonical functor $A \to 1$ is a weak equivalence.
Since any natural transformation of functors induces a homotopy, it follows that any category with a natural transformation between the identity functor and a constant endofunctor is contractible, and hence aspheric.
In particular, a category with an initial or terminal object is aspheric.
It follows from Quillen's Theorem~A~\parencite{Quillen1973} that a functor $f : A \to B$ is a weak equivalence, if it is \emph{aspheric},
meaning that all the slice categories $A_{/b}$ are aspheric, for $b \in B$.%
\footnote{We follow Grothendieck and let the slice $A_{/b}$ denote the comma category $(f \downarrow b)$ of the functors $f:A\to B$ and $b:1\to B$. Similarly, the coslice ${}_{b\setminus{}}A$ denotes $(b \downarrow f)$.}
By duality, so is any \emph{coaspheric} functor $f : A \to B$, one for which the coslice categories ${}_{b\setminus{}}A$, for $b \in B$, are all aspheric.
Finally, we say that a presheaf $X$ in $\widehat A$ is \emph{aspheric}, if $A_{/X}$ is (note that $A_{/X}$ is the category of elements of $X$).

For any small category $A$, we can use the adjunction induced by the functor $i_A : A \to \Cat$, $i_A(a) = A_{/a}$ (as mentioned in subsection~\ref{sec:testcategories})
to define the class of weak equivalences in $\widehat A$, $\mathcal W_A$; namely,
$f : X \to Y$ in $\widehat A$ is in $\mathcal W_A$ if $i_A(f) : A_{/X} \to A_{/Y}$ is a weak equivalence of categories.
Following Grothendieck, we make the following definitions:
\begin{itemize}
\item $A$ is a \emph{weak test category} if the induced functor $\overline{i_A} : (\mathcal W_A)\inv\widehat A \to \Hot$ is an equivalence of categories (the inverse is then $\overline{i_A^*}$).
\item $A$ is a \emph{local test category} if all the slices $A_{/a}$ are weak test categories.
\item $A$ is a \emph{test category} if $A$ is both a weak and a local test category.
\item $A$ is a \emph{strict test category} if $A$ is a test category and the functor $\widehat A \to \Hot$ preserves finite products.
\end{itemize}

The goal of the rest of this section is to prove that all the cube categories \emph{with weakening} are test categories, and to establish exactly which ones are strict test categories. First we introduce Grothendieck interval objects. These allow us to show that any cartesian cube category is a strict test category (Cor.~\ref{cor:cart-strict}) as well as to show that all the cube categories are test categories (Cor.~\ref{cor:cube-test}). Then we adapt an argument of \textcite{Maltsiniotis2009} to show that any cube category with a connection is a strict test category (Thm.~\ref{thm:conn-strict}). Finally, we adapt another argument of his to show that the four remaining cube categories fail to be strict test categories (Thm.~\ref{thm:non-strict}). The end result is summarized in Tab.~\ref{tab:cubicaltest}.

\begin{table}[t]
  \centering
  \setlength{\tabcolsep}{12pt}
  \begin{tabular}{ccccccc} \toprule
 a$\smallsetminus$b   & $\zstr$ & ${'}$ & ${\vee}$ & ${\wedge}$
    & ${\vee}{\wedge}$ & ${\vee}{\wedge}'$ \\ \midrule
  w &  t  & t  & st & st & st & st \\
 we &  t  & t  & st & st & st & st \\
wec &  st & st & st & st & st & \makebox[0pt][c]{st/st/st} \\ \bottomrule
  \end{tabular}
  \vspace{6pt}
  
  \caption{Which canonical cube categories $\C_{(a,b)}$ are test (t) or even strict test (st) categories. The bottom-right corner refers to the cube categories corresponding to de~Morgan, Kleene and boolean algebras.}
  \label{tab:cubicaltest}
\end{table}

In order to study test categories, \textcite{Grothendieck1983} introduced the notion of an
\emph{interval} (\emph{segment} in the terminology of \textcite{Maltsiniotis2005}) in a presheaf category $\widehat A$. This is an object $I$ equipped with two global elements $d^0,d^1$. As such, it
is just a structure for the initial cubical language, $L_{(\zstr,\zstr)}$ in
the cartesian monoidal category $\widehat A$, and $\square^1$ is thus canonically a Grothendieck
interval in all our categories of cubical sets, $\widehat\C$.

An interval is \emph{separated} if the equalizer of $d^0$ and $d^1$ is
the initial presheaf, $\varnothing$. For cubical sets, this is the
case if and only if $0=1$ is not derivable in the base category, in any variable context.


The following theorem is due to \textcite[44(c)]{Grothendieck1983}, cf.~Thm.~2.6 of \textcite{Maltsiniotis2009}.
We say that $A$ is \emph{totally aspheric} if $A$ is non-empty and all the products $\yo(a) \times \yo(b)$ are aspheric, where $\yo : A \to \widehat A$ is the Yoneda embedding.

\begin{theorem}[Grothendieck]
  If $A$ is a small category that is totally aspheric and has a
  separated aspheric interval, then $A$ is a strict test category.
\end{theorem}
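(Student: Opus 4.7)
\medskip
\noindent\textbf{Proof proposal.} The plan is to verify the four conditions in the definition of a strict test category---that $A$ is a weak test category, that every slice $A_{/a}$ is a weak test category, and that the localisation functor $\widehat A\to\Hot$ preserves finite products---by using the aspheric interval $I$ to produce homotopies in $\widehat A$ and total asphericity to control products.

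The central step is to show that the aspheric interval endows $\widehat A$ with a well-behaved cylinder: for every presheaf $X$, the projection $X\times I\to X$ is a weak equivalence in $\widehat A$, admitting the two distinct sections $X\times d^0$ and $X\times d^1$. I would first establish this on representables. Writing $I$ as the canonical colimit of representables indexed by its category of elements (which is aspheric since $I$ is), total asphericity gives that each $\yo(a)\times\yo(b)$ is aspheric, so a Quillen Theorem~A style argument shows that the colimit $\yo(a)\times I$ remains aspheric. Since $\yo(a)$ has a terminal object in its category of elements and is therefore itself aspheric, the projection $\yo(a)\times I\to\yo(a)$ is a weak equivalence. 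A further colimit argument over the category of elements of $X$ extends this to arbitrary presheaves, and separatedness of $I$ guarantees that the two sections remain distinct.

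Given this cylinder, the first three conditions follow from the standard Grothendieck--Maltsiniotis machinery. The interval homotopies witness that the derived counit of the adjunction $i_A\dashv i_A^*$ is a natural weak equivalence, so $A$ is a weak test category. For the local test property, each slice $A_{/a}$ inherits total asphericity (products in $A_{/a}$ are computed as pullbacks in $A$ whose asphericity is controlled by total asphericity of $A$) and inherits a separated aspheric interval by pulling $I$ back along the canonical functor of presheaf categories, so applying the same argument inside each slice shows $A_{/a}$ is a weak test category.

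It remains to establish strictness. Since $\widehat A\to\Hot$ sends a presheaf $X$ to the homotopy type of $A_{/X}$, this amounts to showing that for any $X,Y\in\widehat A$ the canonical comparison functor $A_{/X\times Y}\to A_{/X}\times A_{/Y}$ is a weak equivalence of categories. By Quillen's Theorem~A this reduces to checking that the fibres are aspheric; expressing $X$ and $Y$ as colimits of representables and invoking total asphericity on the representable case $\yo(a)\times\yo(b)$ makes each fibre a colimit of aspheric presheaves over an aspheric indexing category, hence aspheric. The main obstacle I anticipate is the careful bookkeeping needed to reduce both the cylinder statement and the product statement to the representable case---checking that asphericity genuinely propagates through the relevant colimits---which is where Quillen's Theorem~A and the observation that categories with initial or terminal objects are aspheric do the real work.
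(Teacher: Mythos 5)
The paper states this theorem without proof, attributing it to \textcite[44(c)]{Grothendieck1983} and pointing to Theorem~2.6 of \textcite{Maltsiniotis2009}, so there is no in-paper argument to compare against; I evaluate your proposal on its own.

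Your cylinder construction and the final strictness step are essentially right. The slices of the comparison functor $A_{/X\times Y}\to A_{/X}\times A_{/Y}$ over $((a,x),(b,y))$ are exactly the categories $A_{/\yo(a)\times\yo(b)}$ (no colimit decomposition of $X$, $Y$ is required), which are aspheric by total asphericity, so Quillen's Theorem~A gives that $i_A$ takes binary products to products up to weak equivalence. That is the substance of strictness once $A$ is already known to be a test category.

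The weak-test and local-test steps are where the proposal stops being a proof. Asserting that ``the interval homotopies witness that the derived counit of $i_A\dashv i_A^*$ is a natural weak equivalence'' names the conclusion rather than deriving it: showing that a separated aspheric interval forces $A$ to be a local test category is precisely the hard content of this theorem, handled in the literature either by Grothendieck's inductive argument in \emph{Pursuing Stacks} \S 44 or by Cisinski's construction of a model structure on $\widehat A$ from the interval; a functorial cylinder on $\widehat A$ does not on its own make the derived adjunction with $\Cat$ an equivalence. Your intermediate claim that ``each slice $A_{/a}$ inherits total asphericity'' is also unjustified: under $u_!\colon\widehat{A_{/a}}\to\widehat A$, products of representables in $\widehat{A_{/a}}$ become fibre products $\yo(b)\times_{\yo(a)}\yo(c)$, and total asphericity of $A$ controls products, not pullbacks---a subpresheaf of the aspheric $\yo(b)\times\yo(c)$ need not be aspheric. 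Fortunately the real proof does not need this step: the separated aspheric interval alone, with no total-asphericity hypothesis, already yields the local test property; total asphericity is then used (via the elementary lemma that a totally aspheric category is aspheric) to upgrade local test to test, and, as you correctly argue, to strict test. As written, the proposal quietly assumes the hardest part of the theorem and substitutes a false intermediate claim for it.
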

\begin{corollary}\label{cor:cart-strict}
  If $\C$ is any canonical cube category over the full set of structural rules, $\C_{(\cst{wec},b)}$, or the cartesian cube category of de~Morgan algebras or that of boolean algebras, then $\C$ is a strict test category.
\end{corollary}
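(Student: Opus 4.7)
The plan is to verify the two hypotheses of Grothendieck's theorem directly: total asphericity and the existence of a separated aspheric interval. In each of the cases listed, all three structural rules $\cst w,\cst e,\cst c$ are present, so the syntactic category $\C$ is in fact a Lawvere theory and its monoidal product $\otimes$ coincides with the categorical product $\times$, with $[0]$ as terminal object. This is the single structural observation doing the real work.

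For total asphericity, I would argue as follows. Since $\otimes=\times$ in $\C$ and the Yoneda embedding preserves whatever products exist, we have $\yo(a)\times\yo(b)\cong\yo(a\otimes b)$, which is representable. The category of elements of a representable presheaf $\yo(c)$ is the slice $\C_{/c}$, which has a terminal object (namely $\id_c$) and is therefore aspheric by the earlier remark that any category with a terminal object is aspheric. Hence every product $\yo(a)\times\yo(b)$ is aspheric, and $\C$ is non-empty, so $\C$ is totally aspheric.

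For the interval, take $I=\square^1=\yo([1])$ with endpoints $d^0,d^1:\square^0\to\square^1$ induced by the face maps $\eta_0,\eta_1:[0]\to[1]$. Asphericity is the same representability argument. For separatedness, I observe that the equalizer of $d^0,d^1$ in $\widehat\C$ is computed pointwise: at stage $[n]$ it consists of those morphisms $[n]\to[0]$ whose post-compositions with $\eta_0$ and $\eta_1$ agree as morphisms $[n]\to[1]$. Since $[n]\to[0]$ is unique (weakening and $[0]$ terminal), separatedness reduces to $\eta_0\ne\eta_1$ in the appropriate variable context, which holds because $0\ne 1$ in the standard interpretations $[0,1]$, $\mathbb D$, and $\bbtwo$ used to define $\C_{(\cst{wec},b)}$, $\C_{\mathrm{dM}}$, and $\C_{\mathrm{BA}}$ respectively. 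Applying Grothendieck's theorem then gives the conclusion.

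There is essentially no obstacle here: the only point requiring care is confirming that in the presence of all three structural rules the syntactic category is genuinely cartesian, so that representables are stable under binary product. This is standard for Lawvere theories and follows from the comonoid laws satisfied by $(\delta,\varepsilon)$ together with their compatibility with every function symbol.\qed
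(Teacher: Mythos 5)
Your proof is correct and takes essentially the same route as the paper: both apply Grothendieck's theorem by observing that the cartesian structure makes $\C$ totally aspheric (via Yoneda preserving products and representables being aspheric) and that $\square^1$ is a separated aspheric interval because $0=1$ is not derivable. Your version simply spells out the asphericity of representables via slice categories and the pointwise computation of the equalizer, both of which the paper leaves implicit.
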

\begin{proof}
  Since $\C$ has finite products it is totally aspheric, as the Yoneda embedding preserves finite products. The Grothendieck interval corresponding to the $1$-cube is representable and hence aspheric. This interval is separated as $0=1$ is not derivable in any context.\qed
\end{proof}

The following theorem is from \textcite[44(d), Prop.~on p.\ 86]{Grothendieck1983}:
\begin{theorem}[Grothendieck]
  If $A$ is a small aspheric category with a separated aspheric
  interval $(I,d^0,d^1)$ in $\widehat A$, and
  $i : A \to \Cat$ is a functor such that for any $a$ in $A$, $i(a)$
  has a final object, and there is a map of intervals
  $i_!(I) \to \bbtwo$ in $\Cat$, then $A$ is a test category.
\end{theorem}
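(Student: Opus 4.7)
The plan is to verify both halves of the test category definition --- that $A$ is a weak test category, and that each slice $A_{/a}$ is likewise a weak test category --- by reducing to a single argument. Each slice $A_{/a}$ has the terminal object $\id{a}$, hence is aspheric, and the interval $I$ restricts to an aspheric separated interval $I\times\yo(a)$ in $\widehat{A_{/a}}\simeq\widehat{A}_{/\yo(a)}$. The composite $A_{/a}\to A\xrightarrow{i}\Cat$ still has each image equipped with a final object, and the interval map $i_!(I)\to\bbtwo$ localizes appropriately, so the hypotheses of the theorem are inherited by each slice. Thus it suffices to prove the weak test property for $A$ itself.

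For the weak test property, the strategy is to show that $i^{*}:\Cat\to\widehat A$ descends to an equivalence between $(\mathcal W_A)\inv\widehat A$ and $\Hot$. First I would compare the given $i$ with the canonical $i_A$: since each $i(a)$ has a final object and $i_A(a)=A_{/a}$ has $\id{a}$ as terminal object, both are aspheric, so there is a natural zigzag of weak equivalences between $i$ and $i_A$ obtained by picking the finals. This reduces the problem to establishing the equivalence for the specific functor $i$. Next I would use the interval data as a homotopical bridge: the separated aspheric interval $I$ generates a cylinder functor $X\mapsto I\times X$ on $\widehat A$ that detects $\mathcal W_A$, while $\bbtwo$ plays the analogous role on $\Cat$. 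The map $i_!(I)\to\bbtwo$ ensures that these two cylinder structures are compatible under the adjunction $i_!\dashv i^{*}$, so that $i_!$ carries $I$-homotopies to $\bbtwo$-homotopies and $i^{*}$ reflects them. Together with the asphericity of $A$, this should give that $i^{*}$ is fully faithful and essentially surjective on homotopy categories.

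The main obstacle is the last step: showing precisely how the interval map $i_!(I)\to\bbtwo$ translates $\widehat A$-homotopies into categorical homotopies in a way that yields the equivalence on $\Hot$. The cleanest route is to invoke Grothendieck's general mod\'elisateur machinery from \emph{Pursuing Stacks}, where the interval pair $(I,\bbtwo)$ serves as a compatible interval couple for the adjunction $i_!\dashv i^{*}$; the technical content then lies in verifying the axioms of such a couple from the given hypotheses, in particular checking that the final-object selections in the $i(a)$ assemble into a natural transformation that witnesses $i \simeq i_A$ after localization.
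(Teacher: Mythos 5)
The paper does not prove this statement; it cites it as Grothendieck's theorem, referring the reader to \textcite[44(d), Prop.\ on p.\ 86]{Grothendieck1983} (see also \textcite{Maltsiniotis2005} for a modern account). So there is no in-paper proof to compare your sketch against, and the expectation of this paper's reader is simply to accept the result from the literature.

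On the merits of your sketch: the general shape is reasonable --- handle the local test condition by noticing that the hypotheses descend to the slices $A_{/a}$ and that each slice has a terminal object, then concentrate on the weak test property for $A$ using the interval data to compare $\widehat A$ with $\Cat$. But several steps are assertions of what ought to be true rather than arguments, and at least one is suspect as stated. The claim that ``there is a natural zigzag of weak equivalences between $i$ and $i_A$ obtained by picking the finals'' needs care: choosing a final object in each $i(a)$ is not a natural operation, so there is no obvious natural transformation $1 \Rightarrow i$ (or $i \Rightarrow i_A$) to build a zigzag from. The actual comparison in the Grothendieck--Maltsiniotis framework does not go through a natural zigzag of functors $A \to \Cat$ at all; it goes through the theory of \emph{test functors}, showing that the condition ``$i(a)$ has a final object for all $a$'' plus the existence of the interval map $i_!(I) \to \bbtwo$ makes $i$ a weak test functor, and separately that the interval hypotheses force $A$ to be a local test category. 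Likewise, ``the interval $I$ restricts to an aspheric separated interval $I\times\yo(a)$ in $\widehat{A_{/a}}$'' is plausible but unverified --- separatedness in the slice is a genuine check, not a formal consequence. Your final paragraph candidly acknowledges that the technical content (verifying that $(I,\bbtwo)$ forms a compatible interval couple for the adjunction) is exactly what you have not done, which is to say the proposal is an outline of a strategy rather than a proof. For the purposes of this paper, the right move is simply to cite the source, as the authors do.
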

In this case, $i$ is in fact a \emph{weak test functor}, meaning that $i^* : \Cat \to \widehat A$ induces an equivalence $\Hot \to (\mathcal W_A)\inv\widehat A$.
\begin{corollary}\label{cor:cube-test}
  Any canonical cube category $\C$ is a test category.
\end{corollary}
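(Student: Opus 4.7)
The plan is to apply the preceding Grothendieck theorem with $A = \C$ and the canonical functor $i_\C : \C \to \Cat$, $i_\C(a) = \C_{/a}$. Four hypotheses must be verified: that $\C$ is aspheric, that $\square^1$ is a separated aspheric interval, that each $i_\C(a)$ has a final object, and that there is a map of intervals $i_{\C,!}(\square^1) \to \bbtwo$ in $\Cat$. The first three checks are immediate: since weakening is assumed, $[0]$ is terminal in $\C$, so $\C$ is contractible; $\square^1$ is representable so its category of elements $\C_{/[1]}$ has final object $\id{[1]}$ and is aspheric, and separation is the observation that $\eta_0 \ne \eta_1$, readily checked by any non-trivial interpretation; and each slice $\C_{/a}$ has final object $\id{a}$.

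The substance lies in the interval map. Since left Kan extension along Yoneda recovers the inducing functor, $i_{\C,!}(\square^1) \cong \C_{/[1]}$. I propose to define $F : \C_{/[1]} \to \bbtwo$ on an object $t : [n] \to [1]$ by setting $F(t) = 1$ if, when interpreted in the Boolean algebra $\bbtwo$ as a function $\bbtwo^n \to \bbtwo$, the term $t$ takes the value $1$ at some input, and $F(t) = 0$ otherwise. This interpretation is well-defined because $\bbtwo$ satisfies every axiom of Tab.~\ref{tab:axioms} (Kleene's law holds vacuously since $x \wedge x' = 0$ in $\bbtwo$), so it is a model of every cubical theory under consideration. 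Functoriality is then direct: a morphism $t_1 \to t_2$ in $\C_{/[1]}$ is an $h$ with $t_1 = t_2 \circ h$, whence $t_1(\vec a) = 1$ forces $t_2(h(\vec a)) = 1$. The interval structure on $\C_{/[1]}$ is the image under $i_{\C,!}$ of $\eta_0, \eta_1 : \square^0 \to \square^1$; the images of $d^0$ and $d^1$ consist of constantly-zero, resp.\ constantly-one, terms, which $F$ sends to $0$ and $1$ as required.

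The main obstacle is settling on the right interpreting algebra uniformly across all our cube categories. The key insight is that, although the $\C_{(a,b)}$ correspond to a range of different cubical theories, the Boolean algebra $\bbtwo$ is a model of each one (being the most specialized), so the ``takes the value $1$ somewhere'' recipe gives a single, uniformly defined functorial choice of $F$; the rest is bookkeeping.
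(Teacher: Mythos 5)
Your proof is correct, but it takes a genuinely different route from the paper's. The paper applies the same Grothendieck theorem, but rather than using the canonical functor $i_\C(a) = \C_{/a}$, it chooses a bespoke \emph{monoidal} functor $i : \C \to \Cat$ that interprets the interval theory directly in a tiny poset: $i([1]) = \bbtwo$ (so that $i([n]) = \bbtwo^n$) when reversal is absent, and $i([1]) = $ the three-object cospan $\{0\}\to\{0,1\}\leftarrow\{1\}$ when reversal is present (since reversal cannot be interpreted by a monotone map on $\bbtwo$). With this choice, $i_!(\square^1) = i([1])$ is already a finite poset carrying an obvious interval map to $\bbtwo$, and the other hypotheses follow because the products $\bbtwo^n$ (respectively $P^n$) have terminal objects; the authors can thus say the hypotheses ``hold trivially.'' You instead keep $i = i_\C$, for which the first three hypotheses are indeed immediate ($[0]$ terminal and $\id_a$ terminal in $\C_{/a}$), and do the work in the fourth: identifying $i_{\C,!}(\square^1)$ with $\C_{/[1]}$, and defining $F : \C_{/[1]} \to \bbtwo$ by $F(t) = 1$ iff $t$ attains the value $1$ in $\bbtwo^n$. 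The ``$\exists$'' quantifier is exactly what makes $F$ functorial under precomposition (evaluating at the top element $(1,\dots,1)$ would \emph{not} be functorial, since a morphism $h$ need not preserve the top), and the two endpoints are handled correctly. Each approach buys something: the paper's choice shifts the entire burden to picking a clever small target so all verifications become calculations in a $2$- or $3$-object poset, at the cost of a case split on reversal (and a somewhat delicate claim that the cospan poset supports the connections); your choice is uniform across every language and extra theory, since $\bbtwo$ is a model of all of them, and has the pleasant side benefit of exhibiting the canonical $i_\C$ itself as a weak test functor. Both are valid instantiations of the same theorem.
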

\begin{proof}
  The conditions of the theorem hold trivially for any cube category
  without reversal, taking $i$ to be the canonical functor sending $I$
  to $\bbtwo$. If we have reversals, then we can define $i$ by sending
  $I$ to the category with three objects, $\{0\},\{1\},\{0,1\}$ and
  arrows $\{0\},\{1\}\to\{0,1\}$.  Since this is a Kleene algebra,
  this functor is well-defined in all cases.\qed
\end{proof}
The cube categories $\C_{\mathrm{dM}}$ and $\C_{\mathrm{BA}}$ of de~Morgan and boolean algebras are test categories by Cor.~\ref{cor:cart-strict}.

We now turn to the question of which non-cartesian cube categories are strict test categories.
The following theorem was proved by \textcite[Prop.~3.3]{Maltsiniotis2009} for the case of $\C_{(\cst w,{\vee})}$. The same proof works more generally, so we obtain:

\begin{theorem}\label{thm:conn-strict}
  Any canonical cube category $\C_{(a,b)}$ where $b$ includes one of the connections $\vee$, $\wedge$ is a strict test category.
\end{theorem}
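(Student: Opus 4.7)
By the Grothendieck theorem invoked in the proof of Cor.~\ref{cor:cart-strict}, it suffices to show that $\C := \C_{(a,b)}$ is totally aspheric, since $\square^1$ is a separated aspheric interval in $\widehat\C$ for the same reasons as there. The task thus reduces to showing that for all $n, m \ge 0$, the category of elements $E := \C/(\square^n \times \square^m)$ has contractible nerve.

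The plan is to transport Maltsiniotis's argument for $\C_{(\cst w, \vee)}$ from \parencite[Prop.~3.3]{Maltsiniotis2009}, whose key ingredients are a single connection and the weakening face/degeneracy maps. Assume $\wedge \in b$ (the case $\vee$ is dual, swapping $0 \leftrightarrow 1$). We construct an endofunctor $C : E \to E$ together with a zigzag of natural transformations $\id_E \Rightarrow C \Leftarrow K_1 \Leftarrow K$, where $K$ is constant at the object $(0, \eta_0^{\otimes n}, \eta_0^{\otimes m})$ and $K_1(k, f, g) := (k, \eta_0 \varepsilon^{\otimes k}, \eta_0 \varepsilon^{\otimes k})$ is the ``collapsed'' functor depending only on $k$. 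Explicitly, $C$ sends $(k, f, g)$ to $(k + n + m, \tilde f, \tilde g)$, where $\tilde f$ and $\tilde g$ are obtained by post-composing $f, g$ with $\wedge$ coordinate-wise against a batch of fresh variables appended at the right end of the tensor product. The $\wedge$-unit law then identifies the face at the $1$-end of the fresh variables with $(f, g)$, furnishing $\id_E \Rightarrow C$ via $\id_{[k]} \otimes \eta_1^{\otimes(n+m)}$; the $\wedge$-absorption law identifies the face at the $0$-end with the independent object $K_1(k, f, g)$, furnishing $K_1 \Rightarrow C$ via $\id_{[k]} \otimes \eta_0^{\otimes(n+m)}$; and the remaining arrow $K \Rightarrow K_1$ uses the map $\eta_0^{\otimes k} : [0] \to [k]$. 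The resulting zigzag exhibits $E$ as homotopically contractible.

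\textbf{The main obstacle} is the precise definition of $\tilde f, \tilde g$ in the absence of contraction (so a single fresh variable cannot be distributed across multiple output coordinates) and, when $a = \{\cst w\}$, also without exchange (so fresh variables cannot be shuffled past the original ones). The former forces the use of $n + m$ fresh variables rather than a single one; the latter restricts $\tilde f, \tilde g$ to composites of $\wedge$, $f, g$, and $\varepsilon$ applied through tensor factors, without any braiding. For $n = m = 1$, a direct choice is $\tilde f := \wedge \circ (f \otimes \id_{[1]}) \circ (\id_{[k+1]} \otimes \varepsilon)$ and $\tilde g := \wedge \circ (g \otimes \id_{[1]}) \circ (\id_{[k]} \otimes \varepsilon \otimes \id_{[1]})$; the naturality squares then reduce to bifunctoriality of $\otimes$ together with the face-degeneracy, $\wedge$-unit, and $\wedge$-absorption laws of Tab.~\ref{tab:axioms}. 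The general $(n, m)$ case works analogously, with $C(h) := h \otimes \id_{[n+m]}$ on morphisms. Since each axiom used is available whenever $\cst w \in a$ and $\wedge \in b$, the construction extends uniformly across all cube categories covered by the statement, completing the verification of total asphericity.
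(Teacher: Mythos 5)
Your overall strategy is the same as the paper's (which invokes Maltsiniotis's Prop.~3.3 for $\C_{(\cst w,\vee)}$ and claims the argument generalizes): invoke Grothendieck's criterion via total asphericity plus a separated aspheric interval, and then exhibit a deformation retraction of each slice $\C_{/\square^n\times\square^m}$ built from the unit and absorption laws of the connection. However, the proposal as written has two problems, the second of which is a genuine gap.

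First, a minor slip: the claimed natural transformation $K\Rightarrow K_1$ with components $\eta_0^{\otimes k}\colon[0]\to[k]$ is not natural. For a morphism $h\colon([k_1],f_1,g_1)\to([k_2],f_2,g_2)$ in the slice, naturality requires $h\circ\eta_0^{\otimes k_1}=\eta_0^{\otimes k_2}$, which already fails for $k_1=k_2=1$, $f_i=g_i=\eta_0\varepsilon$ and $h=\eta_1\varepsilon$ (which lies in the slice but sends $\eta_0$ to $\eta_1$). The fix is cheap: flip the direction and use $K_1\Rightarrow K$ with components $\varepsilon^{\otimes k}\colon[k]\to[0]$, which is natural by terminality of $[0]$.

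Second, and this is the real gap: your construction of $\tilde f,\tilde g$ for general $(n,m)$ does not exist in the absence of exchange, i.e.\ in the case $a=\{\cst w\}$, which includes the very case $\C_{(\cst w,\vee)}$ on which the whole argument is modeled. You append all $n+m$ fresh variables at the right and then want to wedge $f(x)_i$ with $y_i$ ``coordinate-wise''. But after forming $(f\otimes\id_{[n]})\colon[k+n]\to[2n]$ the output order is $(f(x)_1,\dots,f(x)_n,y_1,\dots,y_n)$, and $\wedge^{\otimes n}$ of this gives $(f(x)_1\wedge f(x)_2,\dots)$, not $(f(x)_1\wedge y_1,\dots)$; interleaving $f(x)_i$ with $y_i$ requires a braiding. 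You flag exactly this concern (``without exchange \dots without any braiding'') but then assert ``the general $(n,m)$ case works analogously'', which it does not for $n>1$ or $m>1$. Your explicit formulas are only valid for $n=m=1$, where the single output coordinates of $f$ and $g$ happen to sit adjacent to their fresh variable after weakening. A correct repair that needs neither exchange nor contraction is to collapse one coordinate at a time: define $C([k],f,g)=([k+1],\,f\otimes\varepsilon,\,(\id_{[m-1]}\otimes\wedge)\circ(g\otimes\id_{[1]}))$ with $C(h)=h\otimes\id_{[1]}$, exhibit the zigzag $\id_E\Rightarrow C\Leftarrow R$ with $R([k],f,g)=([k],f,(\id_{[m-1]}\otimes\eta_0\varepsilon)\circ g)$ and components $\id_{[k]}\otimes\eta_1$, $\id_{[k]}\otimes\eta_0$, observe that $R$ is a retraction onto the full subcategory isomorphic to $\C_{/\square^n\times\square^{m-1}}$, and then induct on $n+m$ down to the representable case $\C_{/\square^n}$, which has a terminal object. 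This makes your argument go through uniformly for all six languages $(a,b)$ covered by the theorem.
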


That leaves four cases: $(\cst w,\zstr)$, $(\cst w,{'})$, $(\cst{we},\zstr)$, $(\cst{we},{'})$.
The first of these, the ``classical'' cube category, is not a strict test
category by the argument of \textcite[Sec.~5]{Maltsiniotis2009}.

Note the unique factorizations we have for these categories:
every morphism $f:[m]\to[n]$
factors as degeneracies, followed
by (possibly) symmetries, followed by (possibly) reversals, followed
by face maps. The isomorphisms are exactly the compositions of
reversals and symmetries (if any).

Next, we use variations of the argument of Maltsiniotis (\emph{loc.~cit.}) to show that none of these four sites are
strict test categories by analyzing the homotopy type of the slice
category $\C_{/\square^1\times\square^1}$ in each case.
\begin{theorem}\label{thm:non-strict}
  The canonical cube categories $\C_{(\cst w,\zstr)}$, $\C_{(\cst w,{'})}$, $\C_{(\cst{we},\zstr)}$ and $\C_{(\cst{we},{'})}$ are not strict test categories.
\end{theorem}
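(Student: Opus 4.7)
The plan is, in each of the four cases, to contradict the strict test property by showing that the presheaf $\square^1\times\square^1$ is not aspheric in $\widehat\C$. Since $\square^1$ is representable and hence aspheric, preservation of finite products by $\widehat\C\to\Hot$ would force $\square^1\times\square^1$ to be aspheric; equivalently, the category of elements $\C_{/\square^1\times\square^1}$ would have to be weakly contractible. I will exhibit a non-trivial element of $\pi_1$. The case $(\cst w,\zstr)$ is essentially Maltsiniotis's; the new content is adapting his analysis to the three variants $(\cst w,{}')$, $(\cst{we},\zstr)$ and $(\cst{we},{}')$.

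First, I would use the unique factorization noted just above the theorem (degeneracies, symmetries, reversals, face maps) to classify the morphisms $[n]\to[1]$ in each category: every such morphism is either a constant $0$ or $1$, or a (possibly reversed) projection $\pi_k$ onto a single coordinate of $[n]$. Using this I would enumerate the non-degenerate cells of $\square^1\times\square^1$, namely the pairs $(f,g)$ of morphisms $[n]\to[1]$ that do not factor through a common weakening. In every case the result is: four vertices $\{0,1\}^2$ at dimension $0$; the four axis-aligned edges $(\id{},0)$, $(\id{},1)$, $(0,\id{})$, $(1,\id{})$ together with a ``diagonal'' edge $(\id{},\id{})\colon(0,0)\to(1,1)$, plus reversed variants when $b={}'$; the two squares $(\pi_1,\pi_2)$ and $(\pi_2,\pi_1)$ at dimension $2$, with reversed variants in the $b={}'$ cases; and no non-degenerate cells in dimension $\geq 3$, since any pair of morphisms uses at most two distinct coordinates and so factors through $[2]$.

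The crucial observation is that the diagonal $(\id{},\id{})$ is non-degenerate — the morphism $\delta\colon[1]\to[2]$ through which it would factor is precisely contraction, absent from all four cases — and is not a face of any non-degenerate $2$-cell. For $(f,g)$ to have $(\id{},\id{})$ as a face, we would need a face map $\eta\colon[1]\to[2]$ with $f\circ\eta=g\circ\eta=\id{}$; by the classification above this forces $f=g$ (both equal to $\pi_1$ or both to $\pi_2$, with reversed variants unable to compose to the identity), and then $(f,g)$ is itself degenerate. Consequently the loop $(0,\id{})\cdot(\id{},1)\cdot(\id{},\id{})^{-1}$ based at $(0,0)$ admits no $2$-cell filling and represents a non-trivial element of $\pi_1(\square^1\times\square^1)$, contradicting asphericity.

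The main obstacle is verifying that neither exchange nor reversal, even in combination, supplies a filling of this loop. Exchange produces an isomorphism $(\pi_1,\pi_2)\cong(\pi_2,\pi_1)$ in the slice, but the two $2$-cells involved still have only axis-aligned edges as faces; reversal enlarges the $1$- and $2$-skeleta by reversed copies of existing cells whose faces are again disjoint from the diagonal. I would dispatch this by a direct case-by-case inspection using the normal forms above, after which the non-trivial loop persists and the strict test property fails in all four cases.
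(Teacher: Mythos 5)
Your central observation is exactly the one driving the paper's proof: the diagonal edge $(\id{},\id{})\colon\square^1\to\square^1\times\square^1$ is non-degenerate (no contraction $\delta$ to factor it through) and is not a face of either non-degenerate square, so $\C_{/\square^1\times\square^1}$ carries a non-trivial $1$-cycle, which defeats asphericity and hence strictness. But the \emph{route} you take is genuinely different, and it has a gap that the paper's route is precisely designed to avoid.

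You propose to read off the homotopy type of $\square^1\times\square^1$ from a CW model built from its non-degenerate cells, and then to argue that a particular loop has ``no $2$-cell filling.'' Two things need justifying here, and neither is addressed. First, the notion of weak equivalence in $\widehat\C$ is defined via the nerve of the category of elements $\C_{/X}$, not via a naive cell-gluing realization, and identifying these is exactly the kind of non-trivial statement the test-category machinery is built around. More seriously, once exchange or reversal is present, $\C_{/\square^1\times\square^1}$ has non-identity isomorphisms (e.g.\ $(\pi_1,\pi_2)\cong(\pi_2,\pi_1)$ via $\tau$, and each square is isomorphic to its reversed variants via $\rho\otimes\id{}$, $\id{}\otimes\rho$), so the list of non-degenerate cells you write down overcounts the actual homotopy type: for $(\cst{we},\zstr)$ the answer is $S^1$, not the $S^2\vee S^1$ your cell count would naively produce. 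You flag the issue (``exchange produces an isomorphism\ldots'') but do not resolve it; a ``direct case-by-case inspection'' doesn't tell you how these identifications collapse the CW model. Second, ``admits no $2$-cell filling'' does not by itself imply non-triviality in $\pi_1$; you would need to invoke the edge-path presentation of $\pi_1$ for a $2$-dimensional complex and observe that the diagonal generator is absent from every relator, which is an additional step your sketch elides.

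The paper avoids both problems by a cleaner reduction: it picks a \emph{full subposet} $A\subseteq\C_{/\square^1\times\square^1}$ containing one representative object per isomorphism class of the relevant non-degenerate cells (one or two hemispheres depending on whether exchange is available, and the anti-diagonal when reversal is), computes the homotopy type of $A$ outright via a Reedy-cofibrant diagram (getting $S^2\vee S^1$, $S^2\vee S^1\vee S^1$, $S^1$, $S^1\vee S^1$ in the four cases), and then shows $A\hookrightarrow\C_{/\square^1\times\square^1}$ is a weak equivalence by the dual of Quillen's Theorem~A, checking that each coslice has an initial object. Because $A$ is a genuine poset with no automorphisms, there is no identification bookkeeping to do, and because the weak equivalence is established by Quillen~A rather than a realization argument, there is no appeal to a geometric realization functor for exotic cube categories. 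To salvage your approach you would essentially need to reproduce this reduction to a skeleton of isomorphism classes in $\C_{/X}$; once you do that you have re-derived the paper's finite poset $A$, at which point a direct computation of its nerve is simpler and more informative than an indirect $\pi_1$-non-triviality argument.
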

\begin{proof}
  Let $\C$ be one of these categories.
  We shall find a full subcategory $A$ of $\C_{/\square^1\times\square^1}$ that is not aspheric, and such that the inclusion is a weak equivalence. Hence $\C_{/\square^1\times\square^1}$ is not aspheric, and $\C$ cannot be a strict test category.

  An object of the slice category $\C_{/\square^1\times\square^1}$ is given by a dimension and two terms corresponding to that dimension.
  We can thus represent it by a variable context $x_1\cdots x_n$ (for an $n$-cube $[n]$) and two terms $s,t$ in that context.
  
  For the classical cube category case $(\cst w,\zstr)$, we let $A$ contain the following objects, cf.~\textcite[Prop.~5.2]{Maltsiniotis2009}:
  \begin{itemize}
  \item $4$ zero-dimensional objects $(\zstr,(i,j))$, $i,j\in\{0,1\}$.
  \item $5$ one-dimensional objects; $4$ corresponding to the sides of a square, $(x,(i,x))$ and $(x,(x,i))$ with $i\in\{0,1\}$, and $1$ corresponding to its diagonal, $(x,(x,x))$.
  \item $2$ two-dimensional objects: $(xy, (x,y))$ and $(xy, (y,x))$ (let us call them, respectively, the northern and southern hemispheres).
  \end{itemize}

  In the presence of the exchange rule, we do not need both of the two-dimensional objects (so we retain, say, the northern hemisphere), and in the presence of reversal, we need additionally the anti-diagonal, $(x,(x,x'))$.

  Note that in each case $A$ is a partially ordered set (there is at most one morphism between any two objects), and we illustrate the incidence relations between the objects in Fig.~\ref{fig:posets}.
  The figure also illustrates how to construct functors $F : A \to \Top$, which are cofibrant with respect to the Reedy model structure on this functor category, where $A$ itself is considered a directed Reedy category relative to the obvious dimension assignment (points of dimension zero, lines of dimension one, and the hemispheres of dimension two).
  We conclude that the homotopy colimit of $F$ (in $\Top$) is weakly equivalent to the ordinary colimit of $F$, which is seen to be equivalent to $S^2 \vee S^1$, $S^2\vee S^1 \vee S^1$, $S^1$ and $S^1 \vee S^1$, respectively for the cases $(\cst w,\zstr)$, $(\cst w,{'})$, $(\cst{we},\zstr)$ and $(\cst{we},{'})$.
  Since $F$ takes values in contractible spaces, this homotopy colimit represents in each case the homotopy type of the nerve of $A$. It follows that $A$ is not aspheric.

\begin{figure}[t]
\centering
\begin{tikzpicture}
 \draw (-4,5) arc (180:360:2 and 0.5)
   node[pos=0.25,anchor=north] (a00) {$(0,0)$}
   node[pos=0.75,anchor=north] (a10) {$(1,0)$};
 \fill (a00.north) circle(2pt) (a10.north) circle(2pt);
 \draw [dashed] (-4,5) arc (180:0:2 and 0.5)
   node[pos=0.25,anchor=south] (a01) {$(0,1)$}
   node[pos=0.75,anchor=south] (a11) {$(1,1)$};
 \fill (a01.south) circle(2pt) (a11.south) circle(2pt);
 \draw (a00.north) -- (a11.south);
 
 \draw (2,5) arc (180:360:2 and 0.5)
   node[pos=0.25,anchor=north] (b00) {$(0,0)$}
   node[pos=0.75,anchor=north] (b10) {$(1,0)$};
 \fill (b00.north) circle(2pt) (b10.north) circle(2pt);
 \draw [dashed] (2,5) arc (180:0:2 and 0.5)
   node[pos=0.25,anchor=south] (b01) {$(0,1)$}
   node[pos=0.75,anchor=south] (b11) {$(1,1)$};
 \fill (b01.south) circle(2pt) (b11.south) circle(2pt);
 \pgfmathanglebetweenpoints{\pgfpointanchor{b01}{south}}{\pgfpointanchor{b10}{north}}
 \edef\endangle{\pgfmathresult}
 \pgfmathsetmacro\startangle{-180+\endangle}
 \draw (b00.north) -- (b11.south);
 \draw (b01.south) -- ($ (4,5)+(\startangle:.2) $)
    arc[start angle=\startangle,delta angle=-180,radius=0.2]
    ($ (4,5)+(\endangle:0.2) $) -- (b10.north);

 \draw (-4,0) arc (180:360:2 and 0.5)
   node[pos=0.25,anchor=north] (c00) {$(0,0)$}
   node[pos=0.75,anchor=north] (c10) {$(1,0)$};
 \fill (c00.north) circle(2pt) (c10.north) circle(2pt);
 \draw [dashed] (-4,0) arc (180:0:2 and 0.5)
   node[pos=0.25,anchor=south] (c01) {$(0,1)$}
   node[pos=0.75,anchor=south] (c11) {$(1,1)$};
 \fill (c01.south) circle(2pt) (c11.south) circle(2pt);
 \draw (c00.north) -- (c11.south);
 
 \draw (2,0) arc (180:360:2 and 0.5)
   node[pos=0.25,anchor=north] (d00) {$(0,0)$}
   node[pos=0.75,anchor=north] (d10) {$(1,0)$};
 \fill (d00.north) circle(2pt) (d10.north) circle(2pt);
 \draw [dashed] (2,0) arc (180:0:2 and 0.5)
   node[pos=0.25,anchor=south] (d01) {$(0,1)$}
   node[pos=0.75,anchor=south] (d11) {$(1,1)$};
 \fill (d01.south) circle(2pt) (d11.south) circle(2pt);
 \draw (d00.north) -- (d11.south);
 \draw (d01.south) -- ($ (4,0)+(\startangle:.2) $)
    arc[start angle=\startangle,delta angle=-180,radius=0.2]
    ($ (4,0)+(\endangle:0.2) $) -- (d10.north);

 \draw (0,5) arc (360:0:2 and 2);
 \draw (6,5) arc (360:0:2 and 2);
 \draw (-4,0) arc (180:0:2 and 2);
 \draw (2,0) arc (180:0:2 and 2);

 \draw (-4.75,5) node {$A_{(\cst w,\zstr)} =$}
       (-4.75,0.5) node {$A_{(\cst{we},\zstr)} =$}
       (1.25,5) node {$A_{(\cst w,{'})} =$}
       (1.25,0.5) node {$A_{(\cst{we},{'})}=$};
    \end{tikzpicture}
    \caption{The partially ordered sets $A = A_{(a,b)}$ in their topological realizations.}
    \label{fig:posets}
  \end{figure}
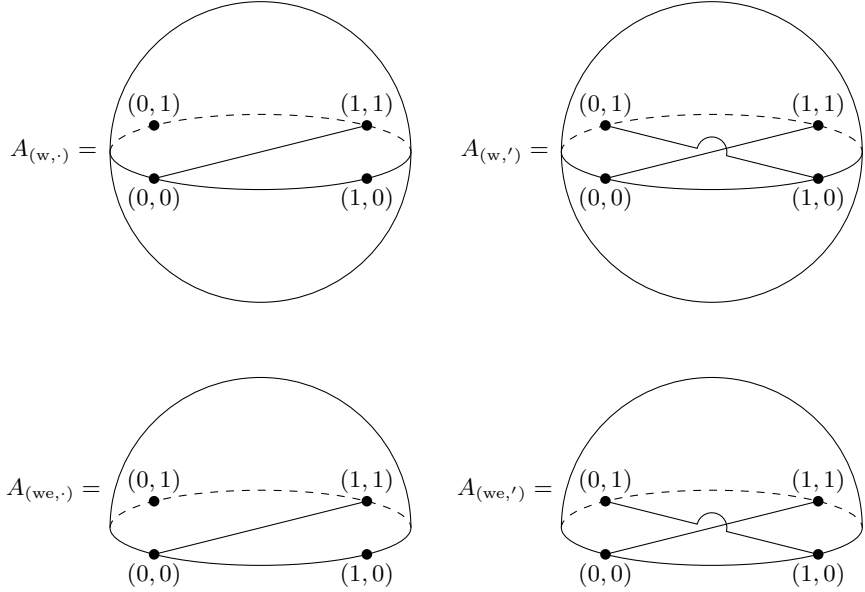

  It remains to see that the inclusion $A \hookrightarrow \C_{/\square^1\times\square^1}$ is in each case a weak equivalence.
  For this we use the dual of Quillen's Theorem~A, and show that for each object $(x_1\cdots x_n,(s,t))$ (written $(s,t)$ for short) of $\C_{/\square^1\times\square^1}$, the coslice category $B_{(s,t)}={}_{(s,t)\setminus}A$ has an initial object, and is hence aspheric.

  As in Maltsiniotis (\emph{loc.~cit.}), we make case distinctions on $(s,t)$.
  Note that for the languages under consideration, a term can have at most one free variable.
  \begin{itemize}
  \item $(s,t)=(x('),y('))$ for distinct variables $x,y$. The initial object is a hemisphere (the northern one in the presence of exchange, the southern one without exchange and in case $x,y$ appear in reversed order in the variable context).
  \item $(s,t)=(x,x)$ or $(x',x')$. The initial object is then the diagonal in $A$.
  \item $(s,t)=(x,x')$ or $(x',x)$. The initial object is here the anti-diagonal.
  \item $(s,t)=(i,x('))$ or $(x('),i)$. The initial object is the corresponding side of the square.
  \item $(s,t)=(i,j)$. The initial object is then the corresponding vertex of the square.\qed
  \end{itemize}
\end{proof}
We remark that this proof method fails in the presence of a connection, where terms can now refer to multiple variables.
And in the presence of contraction, the diagonal $(x,x)$ is incident on the northern hemisphere, which is then a maximal element in $A$, and hence $A$ is contractible.
These observations explain why it is only those four cases that can fail to give strict test categories.

\section{Conclusion}
\label{sec:conclusion}

We have espoused a systematic algebraic framework for describing notions of cubical sets, and we have shown that all reasonable cubical sites are test categories.
We have shown that a cubical site is a strict test category precisely when it is cartesian monoidal or includes one of the connections.

To improve our understanding of the homotopy theory of each cubical site, we need to investigate the induced Quillen equivalence between the corresponding category of cubical sets $\widehat\C$ with the Cisinski model structure and the category of simplicial sets $\widehat\Delta$ with the Kan model structure.
For instance, we can ask whether the fibrations are those satisfying the cubical Kan filling conditions.
Furthermore, to model type theory, one should probably require that the model structures lift to algebraic model structures. We leave all this for future work.

\section{Acknowledgements}
\label{sec:acknowledgements}

We wish to thank the members of the HoTT group at Carnegie Mellon University for many fruitful discussions, in particular Steve Awodey who has encouraged the study of cartesian cube categories since 2013 and who has been supportive of our work, as well as Bob Harper who has also been very supportive.
Additionally, we deeply appreciate the influence of Bas Spitters who inspired us with a seminar presentation of a different approach to showing that $\C_{(\cst{wec},\zstr)}$ is a strict test category.

The authors gratefully acknowledge the support of the Air Force Office of Scientific Research through MURI grant FA9550-15-1-0053.
Any opinions, findings and conclusions or recommendations expressed in this material are those of the authors
and do not necessarily reflect the views of the AFOSR.

\ifarxiv
\bibliographystyle{hplainnat}
\bibliography{cubical}
\else
\printbibliography
\fi
\end{document}
